\documentclass[12pt]{amsart}
\usepackage[headings]{fullpage}

\usepackage{amsmath,amssymb,amsthm}
\usepackage{graphicx}
\usepackage{enumerate}
\usepackage{url}
\usepackage{slashed}
\usepackage{bm}
\usepackage{array}

\usepackage{tikz}
\usepackage{tikz-cd}
\usepackage[german,english]{babel}

\usetikzlibrary{arrows.meta}

\usepackage[bookmarks=true,%
    colorlinks=true,%
    linkcolor=blue,%
    citecolor=blue,%
    filecolor=blue,%
    menucolor=blue,%
    urlcolor=blue,%
    breaklinks=true]{hyperref}

\usepackage[all]{xy}
\usepackage{verbatim}

\newtheorem{thm}{Theorem}[]
\newtheorem*{thm*}{Theorem}
\newtheorem{lem}[thm]{Lemma}
\newtheorem{prop}[thm]{Proposition}
\newtheorem{cor}[thm]{Corollary}

\newtheorem{ex}[thm]{Example}
\newtheorem{rem}[thm]{Remark}
\newtheorem{defn}[thm]{Definition}

\newcommand{\param}{{\mathchoice{\mkern1mu\mbox{\raise2.2pt\hbox{$
\centerdot$}}
\mkern1mu}{\mkern1mu\mbox{\raise2.2pt\hbox{$\centerdot$}}\mkern1mu}{
\mkern1.5mu\centerdot\mkern1.5mu}{\mkern1.5mu\centerdot\mkern1.5mu}}}

\begin{document}
\title{The diameter function is a topological Morse function}
\author{Ingrid Irmer, Bhola Nath Saha}
\address{SUSTech International Center for Mathematics\\
Southern University of Science and Technology\\Shenzhen, China
}
\address{Department of Mathematics\\
Southern University of Science and Technology\\Shenzhen, China
}
\email{ingridmary@sustech.edu.cn}
\address{Department of Mathematical Science\\
Indian Institute of Science Education and Research Berhampur}

\email{sahabholanath497@gmail.com}

\today

\begin{abstract}
Schmutz Schaller developed techniques for studying Teichm\"uller space using the systole function. These were presented in  \cite{SchmutzMorse}, \cite{SchmutzVoronoi} as a hyperbolic analogue of Voronoi's theory of quadratic forms in the theory of Euclidean lattice packings and coverings \cite{VoronoiPDQF}. It is known that the packing density function on Euclidean space is a topological Morse function, \cite{TMFAsh}, and the same is true of the systole function on Teichm\"uller space, \cite{Akrout}, \cite{SchmutzMorse}. The study of hyperbolic packing and covering problems is technical, for example, the density depends on the scale, and very little is known about optimisers of the density \cite{tóth2022ballpackingshyperbolicspace}. At least in the Euclidean setting, there are also fewer techniques available for studying sphere covering as opposed to sphere packing problems, as the covering problems seem to have less discernible structure. One approach to studying efficient circle coverings in the hyperbolic plane is to study the critical points of the diameter function on Teichm\"uller space. This paper shows that the diameter function on Teichm\"uller space is a topological Morse function. As a mapping class group-equivariant topological Morse function, critical points of the diameter function are related to the homology of moduli space. It would seem that for small genus, the systole function and diameter function have a larger proportion of common critical points than at higher genus.
\end{abstract}

\maketitle


\section{Introduction and history of the problem}
\label{intro}

Let $\mathcal{T}_{g}$ be the Teichm\"uller space of closed, connected surfaces of genus $g\geq 2$. The diameter function $f_{\mathrm{dia}}:\mathcal{T}_{g}\rightarrow \mathbb{R}_{+}$ is the function on Teichm\"uller space $\mathcal{T}_{g}$ that takes a point $x$ to the diameter of the marked hyperbolic surface corresponding to $x$.  It is invariant under the action of the mapping class group and bounded from below.  
\begin{prop}
\label{PWS}
$f_{\mathrm{dia}}$ is a continuous piecewise smooth function $\mathcal{T}_{g}\rightarrow \mathbb{R}_{+}$.
\end{prop}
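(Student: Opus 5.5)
We would prove Proposition~\ref{PWS} by writing $f_{\mathrm{dia}}$ locally as a max--min of finitely many smooth functions on $\mathcal{T}_g$. Fix $x_0\in\mathcal{T}_g$ and a small compact neighbourhood $U$. Identify every $x\in U$ with a hyperbolic metric on a fixed surface $S$, using a smooth family of marked metrics (for example via Fenchel--Nielsen coordinates and a smooth choice of developing maps). The diameter is
\[
f_{\mathrm{dia}}(x)=\max_{p\in S}\max_{q\in S} d_x(p,q)=\max_{p\in S} R_x(p),
\]
where $R_x(p)$ is the covering radius from $p$, i.e.\ the farthest distance from $p$. A diametral pair $(p,q)$ is a point where $d_x(p,\cdot)$ attains its maximum. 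Since $d_x(p,\cdot)$ has no local maxima in the interior of a smooth region, $q$ must be a vertex of the cut locus of $p$. Equivalently, in the Dirichlet domain centred at a lift $\tilde p$, the point $\tilde q$ is a vertex, hence equidistant from $\tilde p$ and at least two other translates $\gamma_i\tilde p$.

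\textbf{Continuity.} We would first note that the metrics $d_x$ on $S$ vary continuously in $x$, uniformly on compact sets. In fact, for nearby $x,x'$ the identity map is $(1+\epsilon)$-bi-Lipschitz, for instance via a $K$-quasiconformal comparison. Hence $|f_{\mathrm{dia}}(x)-f_{\mathrm{dia}}(x')|\le \epsilon\, f_{\mathrm{dia}}(x)$, which gives continuity.

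\textbf{Piecewise smoothness.} Lift to $\mathbb{H}^2$ with holonomy $\rho_x$ depending real-analytically on $x$. The injectivity radius is bounded below on $U$, and the diameter is bounded above there. So only a finite set $\Gamma_U$ of deck transformations $\gamma$ with $d(\tilde p,\gamma\tilde p)\le 2\max_U f_{\mathrm{dia}}+1$ can contribute to Dirichlet domain edges, for $\tilde p$ in a compact fundamental region. For each triple $\{1,\gamma_1,\gamma_2\}\subset\Gamma_U$, the circumcentre $c(x,\tilde p,\gamma_1,\gamma_2)$ of $\tilde p,\rho_x(\gamma_1)\tilde p,\rho_x(\gamma_2)\tilde p$ is a real-analytic function where defined, and so is its circumradius $r_{\gamma_1\gamma_2}(x,\tilde p)$. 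Then
\[
R_x(p)=\min\{\, r_{\gamma_1\gamma_2}(x,\tilde p): \text{the triple is a Delaunay triangle}\,\}.
\]
Better, the farthest point is the Delaunay-vertex circumradius that is maximal among valid ones. Validity is cut out by finitely many analytic inequalities: no other orbit point lies inside the circle.

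The diametral $p$ then maximises $R_x$ over a compact set. A two-step maximum of this kind is awkward, so we would use the symmetric description instead. At a diametral configuration, $p$ and $q$ are each vertices of the other's Voronoi cell, since $q$ maximises distance from $p$ and vice versa. This pins $(p,q)$ down, generically, as a solution of a finite system of analytic equations: equidistance conditions from several geodesics realising $d(p,q)$. Its solution depends analytically on $x$ away from a semi-analytic degeneracy locus. So $f_{\mathrm{dia}}$ equals, on $U$, the maximum over finitely many candidate combinatorial configurations $\sigma$ of analytic functions $F_\sigma$. Each $F_\sigma$ is restricted to the semi-analytic set where $\sigma$ is realised. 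A continuous function that agrees on each piece of a finite semi-analytic stratification with an analytic function is piecewise smooth.

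\textbf{Main obstacle.} We expect the hardest step to be finiteness and analyticity of the candidate configurations $\sigma$. We must show that the number of geodesics realising $d(p,q)$ at a diametral pair is bounded, and that the implicit-function solutions are nondegenerate off a lower-dimensional set. We would first try to use the local-maximum condition: $q$ must lie in the convex hull of the unit initial vectors of minimising geodesics from $p$, and symmetrically for $p$. This forces at least three geodesics in the generic case. The configuration is then rigid, so the pair $(p,q)$ is determined by equating lengths, and transversality can be checked in $\mathbb{H}^2$ using hyperbolic trigonometry.
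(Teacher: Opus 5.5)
Your continuity argument is fine, though quasiconformality alone gives only H\"older control. The $(1+\epsilon)$-bi-Lipschitz bound should instead come from a continuous family of smooth metrics on a fixed surface.

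\textbf{The gap.} It is in the piecewise-smoothness step, exactly where you flag the main obstacle, and the resolution you sketch does not work. Equidistance conditions for $k$ minimising arcs are $k-1$ equations on the four-dimensional space $S\times S$. With three arcs they cut out a two-parameter family of equal-length configurations (the paper's deformable $3$-pods), not a rigid one. So ``the configuration is then rigid, so the pair $(p,q)$ is determined by equating lengths'' is false.

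The ``symmetric description'' adds no equations. The set of minimising arcs is the same whether viewed from $p$ or from $q$. Maximality of $d(p,\cdot)$ at $q$ and of $d(\cdot,q)$ at $p$, taken separately, only impose inequalities on the arc directions. (The correct condition is that $0$, not $q$, lies in the convex hull of the unit tangents at each endpoint.) You are therefore back to the two-step maximisation over a positive-dimensional set that you wanted to avoid. Moreover, ``analytic away from a semi-analytic degeneracy locus'' says nothing about $f_{\mathrm{dia}}$ at points of that locus, which is exactly where piecewise smoothness is in question.

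\textbf{The missing idea.} You need a reason why only rigid configurations can realise the diameter.
\begin{itemize}
\item In the paper this comes from the lengthening deformations of non-rigid pods (Lemma~\ref{calclem}, Corollary~\ref{defcor}). The length of a non-rigid pod is then attained at rigid $m$-pods on the boundary of its domain. Lemma~\ref{LFL} makes these rigid pods locally finite, so $f_{\mathrm{dia}}$ is locally a maximum of finitely many smooth pod lengths.
\item In your framework, a clean substitute is joint convexity of distance on $\mathbb{H}^2\times\mathbb{H}^2$. Suppose some $v\neq 0$ had $d\ell_i(v)\ge 0$ for every active arc. By convexity, the pairs along the product geodesic in direction $v$ would stay diametral. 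But an arc's length can stay equal to the diameter along a product geodesic only if both endpoints slide along that arc, so at most two arcs would be minimising there. This contradicts the three-arc condition of Remark~\ref{basics}. Hence the active gradients positively span $\mathbb{R}^4$. Every diametral pair then has at least five arcs, and the equal-length system has full rank, with no genericity assumption.
\item Alternatively, bypass configurations altogether. Take a finite set $\Gamma_U$ as in your argument and a compact $D\subset\mathbb{H}^2$ containing a fundamental domain for every $x\in U$. Then $\cosh f_{\mathrm{dia}}(x)=\max_{(\tilde p,\tilde q)\in D\times D}\min_{\gamma\in\Gamma_U}\cosh d_{\mathbb{H}^2}(\tilde p,\rho_x(\gamma)\tilde q)$. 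This is a maximum over a compact set of a minimum of finitely many real-analytic functions, hence continuous and subanalytic in $x$. It is therefore analytic on the strata of a finite subanalytic stratification of $U$.
\end{itemize}
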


Topological Morse functions were first defined in \cite{Morse}, and will be defined later in Section \ref{secdefns}. They generalise most of the properties of smooth Morse functions, and often appear in the context of sphere packings or Voronoi decompositions of manifolds.

 In order to compare with the well-known topological Morse function given by the systole function $f_{\mathrm{sys}}$ on $\mathcal{T}_{g}$, we choose to work with the function $-f_{\mathrm{dia}}$ in place of $f_{\mathrm{dia}}$. Like the systole function, $-f_{\mathrm{dia}}$, takes its maximal value somewhere in the thick part of $\mathcal{T}_{g}$, and decreases towards the noded surfaces in the metric completion of $\mathcal{T}_{g}$ with respect to the Weil-Petersson metric.
\begin{thm}
\label{mainthm}
$f_{\mathrm{dia}}$ is a topological Morse function on $\mathcal{T}_{g}$.
\end{thm}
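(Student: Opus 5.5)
The plan is to follow the template used by Akrout for the systole function, writing $f_{\mathrm{dia}}$ locally as a max of smooth functions and checking a nondegeneracy condition at each critical point.

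\textbf{Step 1: local structure.} Fix $x\in\mathcal{T}_{g}$. On the hyperbolic surface $S_x$ the diameter is attained by finitely many pairs of points $(p,q)$. I would argue that at a diameter-realising pair, $q$ is a vertex of the Voronoi/cut locus of $p$. Generically it is joined to $p$ by at least three geodesics of equal length whose initial directions at $q$ do not lie in a closed half-plane. Otherwise one could move $q$ and increase the distance.

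As $x$ varies in a neighbourhood $U$, each such configuration persists. Its common length $\ell_i(y)$ is determined by finitely many geodesic constraints in the universal cover, parametrised by deck transformations. These are real-analytic functions of Fenchel--Nielsen coordinates. This gives
\[
f_{\mathrm{dia}}(y)=\max_{i\in I}\ell_i(y), \qquad y\in U,
\]
with $I$ finite and each $\ell_i$ smooth. This refines Proposition~\ref{PWS}, and $-f_{\mathrm{dia}}=\min_i(-\ell_i)$ then has exactly the form of the systole function.

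\textbf{Step 2: classify points.} Following Akrout, call $x$ regular if there is a tangent vector $v$ with $d\ell_i(v)<0$ for all active $i$ (those with $\ell_i(x)=f_{\mathrm{dia}}(x)$). Call it critical if $0$ lies in the interior of the convex hull of the differentials $\{d\ell_i\}_{i\in I}$ relative to their span. Near a regular point one builds a topological chart in which $f_{\mathrm{dia}}$ is a coordinate projection, via a flow transverse to all level sets. At a critical point, one applies Akrout's theorem on min/max of smooth functions to obtain a chart $f=f(x)+Q$ with $Q$ a nondegenerate form of some index. The index is $\dim\mathcal{T}_g-\operatorname{rank}\{d\ell_i\}$ plus the index of a Hessian-type form of $\sum t_i\ell_i$ restricted to $\bigcap\ker d\ell_i$, where the $t_i>0$ are the convex coefficients.

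\textbf{Step 3: exclude the degenerate boundary cases and verify nondegeneracy.} This is the main obstacle. We must show that no point has $0$ on the relative boundary of the convex hull, i.e.\ that $0$ never sits on a proper face. We must also show that the Hessian condition holds on the kernel.

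For the first, I would try Wolpert-type convexity. Each $\ell_i$, being a length of a geodesic loop through a point or an expression built from such lengths, should be strictly convex along Weil--Petersson geodesics or earthquake paths. Strict convexity implies that the restricted Hessian of $\sum t_i\ell_i$ is positive definite, giving nondegeneracy with index equal to $\dim\mathcal{T}_g-\operatorname{rank}$.

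For the boundary cases, I would use an eutaxy-type argument. If $0$ lay on a proper face, the active set could be shrunk, giving a sub-configuration whose differentials are positively dependent yet still span a smaller space. I would then show that the geometric balancing condition at $q$, from Step 1, forces the full set of differentials to be balanced as well. If the convexity of the diameter-realising length functions fails, the fallback is to prove strict convexity only along twist deformations, which already suffices to show the kernel form is definite.

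Finally, I would combine the regular and critical charts with the mapping class group invariance to conclude that $f_{\mathrm{dia}}$ is a topological Morse function.
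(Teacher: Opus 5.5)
There is a genuine gap, and it is in the step you lean on most: the convexity input in Step 3 has the wrong sign for a maximum.

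Suppose $f_{\mathrm{dia}}=\max_i\ell_i$ near $x$, and $\sum_i t_i\,d\ell_i=0$ with $t_i\geq 0$, $\sum_i t_i=1$ over the active $\ell_i$. Put $\Phi:=\sum_i t_i\ell_i$. Then $f_{\mathrm{dia}}\geq\Phi$ with equality at $x$, and $d\Phi_x=0$. If the $\ell_i$ are strictly convex along Weil--Petersson geodesics, or along earthquake paths in every direction, then $\Phi$ has a strict local minimum at $x$, and hence so does $f_{\mathrm{dia}}$. So the index is $0$, not $\dim\mathcal{T}_g-\operatorname{rank}$.

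Your index formula also double counts. For a maximum, every direction transverse to $\bigcap_i\ker d\ell_i$ is increasing to first order, so the index is just that of the form on the kernel. The value $\dim\mathcal{T}_g-\operatorname{rank}$ is what Akrout's theorem gives when applied to $-f_{\mathrm{dia}}=\min_i(-\ell_i)$ with convex $-\ell_i$, i.e.\ for \emph{concave} $\ell_i$. Length-function convexity supplies the opposite sign.

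The convex version cannot be rescued, because it would make every critical point of $f_{\mathrm{dia}}$ a local minimum. But $f_{\mathrm{dia}}$ is proper on moduli space and attains its minimum along an infinite, discrete mapping class group orbit in the connected space $\mathcal{T}_g$. A mountain-pass (Morse-theoretic) argument then forces critical points that are not minima. Your twist-deformation fallback fails for the same reason. In addition, positivity along a few twist directions, which need not lie in $\bigcap_i\ker d\ell_i$, does not give definiteness.

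The same obstruction applies to the paper's own route. It asserts strict convexity of rigid $n$-pod lengths in a chart and invokes a max version of the criterion of \cite{ConvexTMF}. But a maximum of functions strictly convex in a common chart is itself strictly convex, so following the paper does not supply the missing ingredient.

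Step 1 is also unjustified as stated. Equal length of $k$ geodesics from $p$ to $q$ imposes only $k-1$ conditions on the $4$-dimensional space $S_x\times S_x$. With the three geodesics you describe, the pair moves in a $2$-dimensional family, so ``the common length'' is not yet a function of $y$. The paper handles exactly this with $n$-pod lengths defined as suprema over the set $B(P_n)$ of admissible vertex pairs, the reduction to rigid $n$-pods, and Lemma~\ref{LFL}.

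Conversely, when more geodesics are present than are needed to pin the pair down, the configuration does not persist. For example, the centre and the vertex of the Bolza octagon are joined by eight. By linear-programming duality, the first-order variation of the local maximum of the distance near such a pair is then a \emph{minimum} over balanced sub-configurations. This generally has concave corners, which no maximum of smooth functions can have. So near such points $f_{\mathrm{dia}}$ is a max--min, and Akrout's theorem does not apply as stated.

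Finally, trying to show that $0$ never lies on a proper face of the convex hull is both unnecessary and unsupported. In Akrout's theorem, with the correct Hessian sign, such points are proved to be regular, not excluded. Nothing in your sketch links the balancing of unit tangent vectors at $q$ in $S_x$ to positive dependence of the covectors $d\ell_i$ on $T_x\mathcal{T}_g$. The paper performs no eutaxy analysis at all and leaves these cases to the convexity criterion.
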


By \cite{ConvexTMF}, proving Theorem \ref{mainthm} amounts to proving local finiteness and convexity properties that will be explained below.

For a curve $c$, denote by $L(c):\mathcal{T}_{g}\rightarrow \mathbb{R}$ the function taking $x$ to the length of $c$ on the corresponding marked hyperbolic surface.

Both $f_{\mathrm{sys}}$ and $-f_{\mathrm{dia}}$ will be shown to be variants of the Min-type functions defined in \cite{Hyam}. In Subsection \ref{rigiditysec} $n$-pods and their lengths will be defined, where it will be shown that $n$-pods can be used to study $f_{\mathrm{dia}}$ in much the same way as the lengths of closed curves are used to study $f_{\mathrm{sys}}$.  To highlight the similarities between the two functions, note that the systole function, $f_{\mathrm{sys}}$ is defined to be the function $\mathcal{T}_{g}\rightarrow \mathbb{R}$ whose value at $x\in \mathcal{T}_{g}$ is given by 
\begin{equation*}
\inf\{L(c)(x)\ |\ c\text{ is a closed geodesic on }S_{g}\},
\end{equation*}
where $S_{g}$ is the marked hyperbolic surface corresponding to the point $x\in\mathcal{T}_{g}$. Similarly, the diameter function, $f_{\mathrm{dia}}$ is defined to be the function $\mathcal{T}_{g}\rightarrow \mathbb{R}$ whose value at $x\in \mathcal{T}_{g}$ is given by 
\begin{equation*}
\text{-}f_{\mathrm{dia}}=\inf\{\text{-}L(P_{n})(x)\ |\ P_{n}\text{ is an }n\text{-pod on }S_{g}\}.
\end{equation*}
where the length $L(P_{n})$ of an $n$-pod $P_{n}$ will be defined in Subsection \ref{rigiditysec}.

One advantage of using $f_{\mathrm{dia}}$ in place of $f_{\mathrm{sys}}$ is that $f_{\mathrm{dia}}$ is directly related to the study of circle coverings of the hyperbolic plane, which appear to have many distinct properties from the more intensely studied circle packings of Euclidean space, see for example \cite{packingandcovering}. Another reason is that $f_{\mathrm{dia}}$ does not have the type of pathological behaviour described in Remark 4.4 of \cite{FillSpan}, and hence appears to be closer to a perfect Morse function on $\mathcal{T}_{g}$ than $f_{\mathrm{sys}}$. As a consequence of an argument credited to Bavard, (see, for example, Proposition 10 of \cite{DiffTop}), as $f_{\mathrm{sys}}$ and $f_{\mathrm{dia}}$ are both equivariant topological Morse functions, any isolated fixed point of the action of the mapping class group on $\mathcal{T}_{g}$ is a critical point of both. Many important examples of critical points of $f_{\mathrm{sys}}$ are known to be isolated fixed point sets of finite subgroups of triangle groups, for example the Hurwitz surfaces. This implies that in genus 2 for example, at least 3 out of the 4 critical points of $f_{\mathrm{sys}}$ are also critical points of $f_{\mathrm{dia}}$, where the critical points of $f_{\mathrm{sys}}$ in genus 2 are given in Theorem 44 of \cite{SchmutzMorse}. For every larger genus, there still exist critical points in common, for example the infinite family of critical points given in Theorem 36 of \cite{SchmutzMorse}, but the proportion of critical points obtained as isolated fixed points of the action of the mapping class group on $\mathcal{T}_{g}$ is smaller, so less is known.

We define the diameter locus $\mathcal{L}_{d}(x)$ of a marked hyperbolic surface $S_{g}$ consisting of the set of pairs of points in $S_{g}$ whose distance realise the diameter of $S_{g}$. It seems as though  $\mathcal{L}_{d}(x)$ is a classical object on which there should exist some mathematical literature. However, we were not able to find any references to the subject, so have derived the results needed. 

\begin{prop}
\label{DLL}
For $g\geq 2$ and every $x\in \mathcal{T}_{g}$, the diameter locus is a smoothly embedded finite graph on $S_{g}$ of dimension at most one.
\end{prop}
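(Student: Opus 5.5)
The plan is to work on the compact surface $S_g\times S_g$ with the function $F(p,q)=d(p,q)$. I read the diameter locus $\mathcal{L}_d(x)$ as the set $F^{-1}(D)$, where $D=f_{\mathrm{dia}}(x)$. I will show that this set is a compact semianalytic subset of dimension at most one. Such a set admits a finite stratification into points and real-analytic open arcs, i.e.\ it is a finite graph with smoothly embedded edges. Its projections to $S_g$ then inherit the same structure.

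\emph{Step 1 (local analytic description).} Lift to $\mathbb{H}^2$ via the holonomy group $\Gamma$. For $(\tilde p,\tilde q)$ near a lift of a diameter pair,
\[
F(p,q)=\min_{\gamma\in\Gamma} d_{\mathbb{H}}(\tilde p,\gamma\tilde q).
\]
Only finitely many $\gamma_1,\dots,\gamma_k$ can come close to realizing this minimum, by discreteness of $\Gamma$. Set $F_i=d_{\mathbb{H}}(\tilde p,\gamma_i\tilde q)$. Each $F_i$ is real-analytic, since $D>0$ keeps us away from the diagonal. Locally
\[
\mathcal{L}_d=\{F_1=\dots=F_k=D\}\cap\{F_j\ge D\ \text{for all } j\},
\]
which is semianalytic. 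By compactness of $S_g\times S_g$, it is therefore globally a compact semianalytic set. The {\L}ojasiewicz/Hironaka stratification then gives finiteness, provided the dimension is at most one.

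\emph{Step 2 (first-order conditions).} The gradient of $F_i$ is $(-u_i,-w_i)$, where $u_i\in T_pS_g$ and $w_i\in T_qS_g$ are the unit initial vectors of the $i$-th minimal geodesic from $p$ to $q$ and from $q$ to $p$. Since $(p,q)$ is a global maximum of $\min_i F_i$, no direction can increase all $F_i$ simultaneously. Fixing $q$ and applying this at $p$ shows that $0$ lies in the convex hull of $\{u_i\}$, and similarly $0$ lies in the convex hull of $\{w_i\}$. In particular $k\ge 2$. If $k=2$, both pairs must be antipodal, $u_2=-u_1$ and $w_2=-w_1$. The tangent cone of $\mathcal{L}_d$ at $(p,q)$ lies in the common kernel
\[
K=\{(a,b): \langle u_i,a\rangle+\langle w_i,b\rangle=0\ \text{for all } i\}.
\]

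\emph{Step 3 (the main obstacle: $\dim\le1$).} If the vectors $(u_i,w_i)\in\mathbb{R}^4$ span a space of dimension at least $3$, then $\dim K\le 1$ and we are done at that point. The degenerate cases are:
\begin{itemize}
\item the $u_i$ all collinear;
\item $k=2$, where the span is $2$-dimensional;
\item $k\ge3$ with a rank drop.
\end{itemize}
These must be handled at second order. In these cases $K$ contains a direction $(a,b)$ along which every $F_i$ is stationary to first order. I would use the strict convexity of hyperbolic distance, namely that the Hessian of $\cosh d_{\mathbb{H}}(\cdot,\cdot)$ is positive definite transverse to the joining geodesic. The aim is to show that, along any $2$-dimensional family in $K$, some combination $\sum\lambda_iF_i$ with $\lambda_i\ge0$, $\sum\lambda_i=1$ (the Lagrange/convex-hull weights from Step 2) has a direction of strictly positive second derivative. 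This would contradict maximality, so the tangent cone of the locus has dimension at most $1$.

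I expect the case analysis here to be the crux. This is especially true for configurations where $p$ and $q$ lie on a closed geodesic of length $2D$ and are antipodal on it, since there the kernel is genuinely large and convexity must be used along the two geodesics simultaneously. Once the tangent cone is at most one-dimensional everywhere, the semianalytic set has dimension at most $1$. Compactness then makes it a finite union of points and finitely many analytic arcs, each smoothly embedded, which is the claimed finite graph.
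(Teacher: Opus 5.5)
\textbf{There is a genuine gap.} Your route differs from the paper's: you use the semianalytic structure of the locus plus an optimality analysis of $\min_i F_i$ on $S_g\times S_g$, whereas the paper gets the dimension bound from the deformation spaces of 3-pods (Remark~\ref{defrem}, resting on Lemma~\ref{calclem} and Corollary~\ref{defcor}). The gap is Step~3. It carries the entire content of ``dimension at most one'', but it is only announced, and the first-order set-up feeding into it is wrong in two places.

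First, locally $\mathcal{L}_d(x)=\{\min_{i\le k}F_i\ge D\}$, not $\{F_1=\dots=F_k=D\}\cap\{F_j\ge D\}$, because along the locus some of the active $F_i$ may exceed $D$. So the tangent cone is only contained in $\{v:\ dF_i(v)\ge 0 \text{ for all } i\}$, with equality forced just on the support of a multiplier, and it need not lie in $K$. In particular, ``rank $\ge 3$, hence done'' does not follow. For example, with $k=4$ and the multiplier supported on three indices, the first-order cone is still $2$-dimensional.

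Second, the first-order condition at a maximum of $\min_iF_i$ is the joint Gordan condition $\sum_i\lambda_i(u_i,w_i)=0$ with common weights $\lambda_i\ge 0$. It is not two separate convex-hull conditions. For $k=3$ this makes the three gradients linearly dependent, so the rank is exactly $2$ and $\dim K=2$. The case you list as degenerate is therefore the basic one, and first order never decides it. Your stated second-order aim (a positive direction ``along any $2$-dimensional family in $K$'') is not yet a criterion that produces a contradiction.

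\textbf{The missing idea} is to combine the second-order necessary condition for a maximum of a min-function with convexity of the distance on $\mathbb{H}^2\times\mathbb{H}^2$.

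The necessary condition: at a maximum, for every $v=(a,b)$ with $dF_i(v)\ge 0$ for all $i$, there is $\mu\ge 0$, $\mu\ne 0$, supported on $\{i:\ dF_i(v)=0\}$, with $\sum\mu_i\nabla F_i=0$ and $\sum\mu_i\nabla^2F_i(v,v)\le 0$. Otherwise Motzkin's alternative gives $w$ with $dF_i(w)+\nabla^2F_i(v,v)>0$ for all such $i$, and along $s\mapsto\exp(sv+\tfrac{s^2}{2}w)$ every active $F_i$ exceeds $D$ for small $s>0$.

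The convexity: each $F_i$ is convex. Let $\alpha,\beta$ be the normal components of $a,b$ relative to a parallel unit normal along the $i$-th arc. A Jacobi field computation gives $\nabla^2F_i(v,v)=\bigl((\alpha^2+\beta^2)\cosh D-2\alpha\beta\bigr)/\sinh D$. This is positive semidefinite with kernel exactly $\mathrm{span}\{(u_i,0),(0,w_i)\}$. Hence $\nabla^2F_i(v,v)=0$ for all $i\in\operatorname{supp}\mu$, and there are two cases.
\begin{enumerate}
\item If the support contains two non-parallel $u_i$, then $a=0$. Then $b\parallel w_i$ together with $\langle w_i,b\rangle=0$ forces $b=0$.
\item Otherwise the support is a pair $\{i,j\}$ with $u_j=-u_i$ and $w_j=-w_i$, and $v\in\mathbb{R}(u_i,-w_i)$.
\end{enumerate}
So the tangent cone of $\mathcal{L}_d(x)$ lies in finitely many lines, one for each pair of diameter arcs closing up to a geodesic of length $2D$ with $p,q$ antipodal. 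This gives $\dim\le 1$. It also shows that exactly three diameter arcs never occur, matching the paper's remark that a 3-pod realises the diameter only inside a 4-pod. With this supplied, your Step~1 does yield the finite graph structure, using semianalytic stratification in place of the paper's explicit deformation families.
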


Proposition \ref{DLL} strongly suggests the existence of an algorithm for computing $f_{\mathrm{dia}}$ on $\mathcal{T}_{g}$, but so far very little is known, \cite{Stepanyants}.

\section{Definitions}
\label{secdefns}
Where there is no possibility of confusion, the symbol $S_{g}$ will be used to denote either the marked hyperbolic surface of genus $g$ corresponding to a point $x\in \mathcal{T}_{g}$, or to the topological surface of genus $g$. A curve is assumed to be an isotopy class of simple closed loops on $S_{g}$. The length of the curve is then the length of the unique geodesic representative of the isotopy class.



Let $ M $ be a topological manifold of dimension $n$ and $ f: M \to \mathbb{R} $ be a continuous function. A point $ p \in M $ is called a \emph{regular point} of the function $ f $ if there exists a coordinate chart $ (U,\phi) $ around $p$ such that $ f $ coincides with one of the coordinate functions, that is, 
$$f=x_{i},$$
for some $ i \in \{1,\dots, n\}, $ where $ \phi = (x_1, \dots, x_n) $. A point is called a \emph{critical point} if it is not a regular point. A critical point is non-degenerate if there exists a coordinate chart $ (U,\phi) $ around $p$ such that for all $z\in U$,
$$f(z) -f(p)= \sum \limits_{i=1}^{j}-x_i^2+\sum\limits_{i=j+1}^{n} x_i^{2}.$$
Here $j$ is the \emph{index} of the critical point.

\begin{defn}[Topological Morse function]
	A function is called a topological Morse function if every point of $M$ is either regular or a nondegenerate critical point.
\end{defn}


For the remainder of this paper, it will be assumed that $g\geq 2$, i.e. we will be studying the diameter of hyperbolic surfaces.

\begin{defn}[Diameter Locus $\mathcal{L}_{d}(x)$]
A point $x\in \mathcal{T}_{g}$ determines a marked hyperbolic surface $S_{g}$. The diameter locus $\mathcal{L}_{d}(x)$ is the locus of pairs of points $p_{1}, p_{2}$ in $S_{g}$ for which the distance $d(p_{1}, p_{2})$ in $S_{g}$ with respect to the hyperbolic metric on $S_{g}$ is equal to the diameter of $S_{g}$.
\end{defn}

As we are working with compact surfaces, there must be at least one pair of points in the diameter locus, so $\mathcal{L}_{d}(x)$ is never empty. When $x$ is a typical point in $\mathcal{T}_{g}$, we expect that  $\mathcal{L}_{d}(x)$ is zero dimensional. The next example is interesting as the surface has a lot of symmetry, and $\mathcal{L}_{d}$ appears to be maximal in some sense.

\begin{ex}[The diameter locus of the Bolza surface]
The Bolza surface $X_2$ is a hyperbolic surface of genus two. It is obtained by identifying the opposite sides of a regular hyperbolic octagon, whose edges all meet at angle $\frac{\pi}{4}$. As an example of a Hurwitz surface, is known to be the most symmetric hyperbolic surface of genus 2 and is the unique global maxima of the systole function in genus two. As is the case for all genus 2 surfaces, $X_2$ is a hyperelliptic surface and we will show that the fixed points of the hyperelliptic involution are elements of the diameter locus. It was shown in \cite{saha2023} that the hyperelliptic involution of the surface can be described as a rotation of the regular octagonal fundamental domain described above by an angle $\pi$ about the center of the octagon. The fixed points are then given by the center, the midpoints of the sides and the vertices of the fundamental domain. 
	
	In \cite{Stepanyants}, it was shown that the diameter of the Bolza surface is realised by the center and the vertex of the octagon and the length of the geodesic arc joining these points is the same as the diameter (the dashed segments in Figure \ref{fig:2}$(i)$). 
	
	It is easy to see that the arc $v_1Ov_5$ is a simple closed geodesic on the surface of length twice the diameter. We say that two points $P$ and $Q$ on this geodesic are diametrically opposite if they subdivide the geodesic in two arcs of equal lengths. We will show that if $P$ and $Q$ are not fixed points, then the distance between $P$ and $Q$ in the surface is less than the diameter. Our strategy to prove the claim is as follows: first, we find an isometry $f$ of the Poincare unit disc $\mathbb{D}$ which is an element of the Fuchsian group $\Gamma$ such that $\mathbb{D}/\Gamma \cong X_2$; then we show that the distance between $f(P)$ and $Q$ in $\mathbb{D}$ is less than the diameter of the surface.
	
	Let $f$ be the isometry that maps the side $v_4v_5$ to the side $v_8v_1$. Then the map $f: \mathbb{D} \to \mathbb{D}$ is given by
	$$f(z)=\frac{z+\tanh\left( \frac{s}{2} \right)}{\tanh\left( \frac{s}{2} \right)z+1},$$
	where $s=2\cosh^{-1}(\sqrt{2}+1)$ is the length of the sides of the octagon. Now let the distance of the point $P$ from the center $O$ be $t$ in $\mathbb{D}$. Then the distance of the point $Q$ from $O$ is $d-t$, where $d=\cosh^{-1}(3+2\sqrt{2})$ is the diameter of $X_2$. The coordinates of the points $P$ and $Q$ are therefore given by $-\tanh{\left( \frac{t}{2}\right)}  e^{\frac{i \pi}{8}}$ and $\tanh{\left( \frac{d-t}{2}\right)}  e^{\frac{i \pi}{8}}$, respectively. Now we can find the distance between the points $f(P)$ and $Q$ using the formula
	$$d(z,w)= \frac{|1-z\bar{w}|+ |z-w|}{|1-z\bar{w}|- |z-w|}, z,w \in \mathbb{D},$$
	and see (using the Matlab or Mathematica) that the distance is less than $d$ for $0<t<d$.
	
	To see the complete diameter locus of the Bolza surface, first we study the isometry group of $X_2$. By joining the midpoints of consecutive sides and the midpoints of opposite sides of the octagon with geodesic segments, the octagon is subdivided into $16$ equilateral triangles, each having interior angle $\pi/4$. The maps $\phi$, $\psi$, and $\eta$, defined respectively as a rotation by angle $\pi/4$ about the center of the octagon, a rotation by angle $\pi/4$ about the center of a triangle, and a reflection across a side of a triangle, are isometries of the Bolza surface (see Figure~\ref{fig:2}$(ii)$). For more details, see Section 2.1 \cite{saha2023}. 
	
	Applying these isometries, we get the full diameter locus, as depicted in Figure \ref{fig:diamlocus}.
\end{ex}

\begin{figure}[htbp]
	\centering
	\begin{tikzpicture}
		\begin{scope}
			\foreach \x in {1,2,...,8}
			{
				\draw [bend left] (-22.5+\x*45:2) to (22.5+\x*45:2);
				
			}
			
			\foreach \x in {1,2,3,4}
			{
			 \draw[thick, dashed] (22.5+\x*45:2) to (180+22.5+\x*45:2);
			}
			
			\foreach \x in {1,2,...,8}
			{
				\draw[red]  (22.5+\x*45:1.95) node {$\bullet$};
			}
			
			\draw (0,0) node {$\bullet$};
			
			\draw (0:1.8) node {$c$} (180:1.8) node {$c$} (45:1.85) node {$d$} (225:1.9) node {$d$} (90:1.8) node {$a$} (270:1.8) node {$a$} (135:1.8) node {$b$} (315:1.8) node {$b$};
			
			\foreach \x in {1,2,...,8}
			 {
			 	\draw  (-22.5+\x*45:2.25) node {$v_{\x}$};
			 }
		 	
		 	\draw (0,-.5) node {$O$};
			
			\draw  (0,-3) node {$(i)$};
		\end{scope}
		
		\begin{scope}[shift={(6,0)}]
			\foreach \x in {1,2,...,8}
			{
				\draw [bend left] (-22.5+\x*45:2) to (22.5+\x*45:2);
				\draw[bend left] (45+\x*45:1.62) to (90+\x*45:1.62);
			}
			\foreach \x in {1,2,3,4}
			{
				\draw (45+\x*45:1.62) to (225+\x*45:1.62);
			}
			\draw[->,thick, green] ({.4*cos(30)},{.4*sin(30)}) arc
			[
			start angle=30,
			end angle=270,
			x radius=.4cm,
			y radius =.4cm
			] ;
			
			\draw[->,thick, blue] ({.7+.4*cos(30)},{-.5+.4*sin(30)}) arc
			[
			start angle=30,
			end angle=270,
			x radius=.2cm,
			y radius =.2cm
			];
			
			\draw[bend left, red, thick] (0:1.62) to (45:1.62);
			
			\draw [green, thick] (-.6,.2) node {$\phi$};
			\draw[thick, blue] (.5,-.25) node {$\psi$};
			\draw[red, thick] (1,.4) node {$\eta$};
			
			\draw (0:1.8) node {$c$} (180:1.8) node {$c$} (45:1.85) node {$d$} (225:1.9) node {$d$} (90:1.8) node {$a$} (270:1.8) node {$a$} (135:1.8) node {$b$} (315:1.8) node {$b$};
			
			\draw  (0,-3) node {$(ii)$};
		\end{scope}
	\end{tikzpicture}
	\caption{$(i)$ The thick dots represent the points realising the diameter and the dashed arcs are the geodesic arcs with length equal to the diameter of the surface. $(ii)$ The isometries of the Bolza surfaces.}
	\label{fig:2}
\end{figure}

\begin{figure}[htbp]
	\centering
	\begin{tikzpicture}[scale=2]
		\foreach \x in {1,2,...,8}
		{
			\draw [bend left] (-22.5+\x*45:2) to (22.5+\x*45:2);
			
		}
		
		\foreach \x in {1,2,...,8}
		{
			\draw[dashed] (0,0) to (22.5+\x*45:2);
			\draw[red]  (22.5+\x*45:2) node {$\bullet$};
		}
		
		\draw (0,0) node {$\bullet$};
		
		\draw (0:1.8) node {$c$} (180:1.8) node {$c$} (45:1.85) node {$d$} (225:1.9) node {$d$} (90:1.8) node {$a$} (270:1.8) node {$a$} (135:1.8) node {$b$} (315:1.8) node {$b$};

		\foreach \x in {1,2,...,8}
		{
			\draw[red, dashed, bend left] (\x*45:1.62) to (90+\x*45:1.62);
		}
	
		\foreach \x in {1,2,...,8}
		{
			\draw [bend left, cyan, thick, dashed] (\x*45:1.62) to (33.75+\x*45:1.77);
		}

			\foreach \x in {1,2,...,8}
		{
			\draw [bend right, thick, cyan, dashed] (\x*45:1.62) to (-33.75+\x*45:1.7);
		}
	
		\foreach \x in {1,2,...,8}
		{
			\draw [blue] (\x*45:1.62) node {$\bullet$};
		}
	\end{tikzpicture}
	\caption{The diameter locus of the Bolza surface is given by the blue, red and black dots.}
	\label{fig:diamlocus}
\end{figure}

\begin{rem}[Some simple observations about $\mathcal{L}_{d}(x)$]
\label{basics}
Suppose $(p, p_{1})$, $(p, p_{2}), \ldots, (p,p_{k})$ are pairs of points in $\mathcal{L}_{d}(x)$. Then the geodesic arcs of length $f_{\mathrm{dia}}(x)$ joining these pairs cannot cross. This is because if $a_{i}$ and $a_{j}$ were arcs connecting $(p, p_{i})$ and $(p,p_{j})$, $1\leq i\leq j\leq k$ that did cross, cutting and pasting would give a piecewise-smooth geodesic arc with geodesic representative of length less than $f_{\mathrm{dia}}(x)$ joining $p$ and $p_{i}$ or $p$ and $p_{j}$. This contradicts the assumption that $p$ is distance $f_{\mathrm{dia}}(x)$ from $p_{i}$ and $p_{j}$. 

Another simple observation is that for every $i=1, \ldots, k$, there must be at least 3 geodesic arcs between $p$ and $p_{i}$ of length $f_{\mathrm{dia}}(x)$; if there were only 1 or 2, it would be possible to find a point near $p_{i}$ at distance greater than $f_{\mathrm{dia}}(x)$ from $p$. As $S_{g}$ is hyperbolic, there are no bigons, so any two such arcs joining the same pair in $\mathcal{L}_{d}(x)$ can be concatenated to give a simple, homotopically nontrivial closed curve on $S_{g}$. The set of all curves obtained in this way has the property that any two elements have pairwise geometric intersection number at most two, and length less at most $2f_{\mathrm{dia}}(x)$.
\end{rem}

\begin{lem}
\label{curvelength}
Let $p_{1}$ and $p_{2}$ be two points on $S_{g}(x)$ with $d(p_{1}, p_{2})=f_{\mathrm{dia}}(x)$, and let $C$ be the set of all curves obtained by concatenating a pair of distinct geodesics arcs of length $f_{\mathrm{dia}}(x)$ from $p_{1}$ to $p_{2}$. Then the cardinality of $C$ is finite.
\end{lem}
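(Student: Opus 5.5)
The plan is to show that there are only finitely many geodesic arcs of length $f_{\mathrm{dia}}(x)$ from $p_{1}$ to $p_{2}$. Once that is known, $C$ is finite: its elements are indexed by unordered pairs of such arcs, so $|C|$ is at most $\binom{N}{2}$, where $N$ is the number of arcs.

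To count the arcs, I will lift to the universal cover. Identify $S_{g}$ with $\mathbb{H}^{2}/\Gamma$, where $\Gamma$ is a cocompact Fuchsian group acting freely. Fix a lift $\tilde p_{1}\in\mathbb{H}^{2}$ of $p_{1}$. Every geodesic arc on $S_{g}$ from $p_{1}$ to $p_{2}$ lifts uniquely to a hyperbolic geodesic segment starting at $\tilde p_{1}$, and this segment ends at some lift $\gamma\tilde p_{2}$ of $p_{2}$, with $\gamma\in\Gamma$. Since $\mathbb{H}^{2}$ is uniquely geodesic, the arc is determined by its endpoint $\gamma\tilde p_{2}$. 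The length of the arc equals $d_{\mathbb{H}^{2}}(\tilde p_{1},\gamma\tilde p_{2})$.

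It follows that the arcs of length $f_{\mathrm{dia}}(x)$ inject into the set of orbit points $\Gamma\tilde p_{2}$ lying in the closed ball $\overline{B}(\tilde p_{1},f_{\mathrm{dia}}(x))$. This set is finite because $\Gamma$ acts properly discontinuously. Concretely, the orbit $\Gamma\tilde p_{2}$ is a discrete subset of $\mathbb{H}^{2}$, and a discrete orbit meets a compact ball in only finitely many points. An alternative count uses a fundamental domain $F$ containing $\tilde p_{2}$: only finitely many translates $\gamma F$ meet the ball, since each has area $\mathrm{area}(S_{g})$ and all lie in a slightly larger ball of finite area.

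Different arcs give different endpoints, and distinct arcs give distinct concatenated curves up to the pairing. Hence $N<\infty$, and $C$ is finite.

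No step should be a serious obstacle. The only point needing care is to make sure the map from arcs to lifted endpoints is well defined and injective. This follows from uniqueness of path lifting once the initial lift $\tilde p_{1}$ is fixed, together with uniqueness of geodesics in $\mathbb{H}^{2}$. Nothing about the diameter is used beyond the fact that the arcs have bounded length, so the same argument bounds the number of geodesic arcs of any bounded length between two points.
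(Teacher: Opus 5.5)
Your proof is correct, but it works with arcs where the paper works with closed curves. The paper never counts arcs. It notes that every element of $C$ has length bounded by $2f_{\mathrm{dia}}(x)$ and cites the standard fact that a closed hyperbolic surface carries only finitely many curves below any length bound, so $C$ is shown finite directly as a set of curves (isotopy classes).

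You instead lift to $\mathbb{H}^{2}$ and inject the geodesic arcs from $p_{1}$ to $p_{2}$ of length $f_{\mathrm{dia}}(x)$ into the finite set $\Gamma\tilde p_{2}\cap\overline{B}(\tilde p_{1},f_{\mathrm{dia}}(x))$. This is self-contained, since only proper discontinuity is used. It also proves a stronger fact: there are finitely many geodesic arcs of any bounded length between two given points. That gives finiteness of $C$ whether ``curve'' means an actual loop or an isotopy class.

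The paper's route is shorter given the cited fact, but finiteness of $C$ does not by itself bound the number of arcs. Your general form is what the paper actually uses later, in the corollary on non-deformable 3-pods. There it needs a uniform gap $\epsilon>0$ between the edge length and the lengths of all other arcs $e_{4}$ joining the two vertices, and that is immediate from your version.

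Drop your sentence claiming that distinct arcs give distinct concatenated curves. The bound $|C|\le\binom{N}{2}$ only uses that $C$ is the image of the set of pairs. The claim is also false for isotopy classes: on the Bolza surface the pairs $\{Ov_{1},Ov_{2}\}$ and $\{Ov_{5},Ov_{6}\}$ both concatenate to a curve isotopic to the side curve $d$, because opposite sides of the octagon are identified.
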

\begin{proof}
Every element of $C$ has length less than $2f_{\mathrm{dia}}(x)$. Since it is known that for any $L>0$ there are at most finitely many curves on $S_{g}$ of length less than $L$ on $S_{g}$, see for example \cite{M}, the result follows.
\end{proof}

\begin{defn}[$n$-pod]
An $n$-pod is an embedded graph in $S_{g}$ with 2 vertices each of valence $n$, and all edges with two distinct endpoints. The $n$ edges are all geodesic arcs of the same length equal to the distance between the vertices, and are disjoint away from the vertices. There might be more than one representative of the homotopy class of these graphs satisfying the defining constraints, in which case the $n$-pod is a connected component of a homotopy class, where the homotopies are assumed to be through 1-parameter families satisfying the defining constraints. If the constrained homotopy class defining an $n$-pod consists of a single point, the $n$-pod will be called rigid.
\end{defn}

An $n$-pod is assumed to inherit a marking from the marking on the hyperbolic surface in which it is embedded. The reason for defining $n$-pods is that rigid $n$-pods will be used in the study of the diameter function in much the same way as simple closed curves are used in the study of the systole function. 

\begin{defn}[Local finiteness]
The systoles in $\mathcal{T}_{g}$ have the property that for any point $x\in \mathcal{T}_{g}$ with set $C$ of systoles, there exists a neighbourhood $\mathcal{N}$ of $x$ in $\mathcal{T}_{g}$ in which the systoles are all contained in the set $C$.
\end{defn}

We will need a property like this for the diameter function, with $n$-pods in place of closed curves.

A finite graph will now be defined, related to the diameter locus. This graph will be used for computations with $-f_{\mathrm{dia}}$ that require an analogue of the local finiteness of $f_{\mathrm{sys}}$. The diameter locus might contain infinitely many pairs of points. The motivation for the next graph is to choose finitely many pairs that are sufficiently representative for our applications.  

\begin{defn}[Diameter Graph $\mathcal{G}_{d}(x)$]
When $\mathcal{L}_{d}(x)$ is 0-dimensional, the vertices of $\mathcal{G}_{d}(x)$ are the points of $\mathcal{L}_{d}(x)$, and the edges are geodesic arcs on $S_{g}(x)$ of length equal to $f_{\mathrm{dia}}(x)$ connecting pairs of points in $\mathcal{L}_{d}(x)$.

When $\mathcal{L}_{d}(x)$ has dimension 1,  homotopy classes of the geodesic arcs connecting pairs of points in $\mathcal{L}_{d}(x)$ are assumed to be with respect to homotopies that keep the endpoints of the arcs in $\mathcal{L}_{d}(x)$. If necessary, subdivide the edges of $\mathcal{L}_{d}(x)$ such that every homotopy class of arc has a representative of length $f_{\mathrm{dia}}(x)$ and with both endpoints on vertices of $\mathcal{L}_{d}(x)$. Then the vertices of $\mathcal{G}_{d}(x)$ consist of the vertices of the subdivided diameter locus, and the edges are the geodesics arcs of length $f_{\mathrm{dia}}(x)$ connecting pairs of vertices.

It will be seen that $\mathcal{L}_{d}(x)$ cannot have dimension greater than 1.
\end{defn}

When we say an $n$-pod is \textit{contained in} $\mathcal{G}_{d}(x)$ this means that the homotopy class contains a representative with equal edge lengths, both vertices in $\mathcal{G}_{d}(x)$ and all edges have length $f_{\mathrm{dia}}(x)$ and are contained in homotopy classes corresponding to edges of $\mathcal{G}_{d}(x)$. 


\section{Local finiteness}
\label{secproofs}

\subsection{Rigidity of $n$-pods}
\label{rigiditysec}

We first study 3-pods, showing existence. Perhaps surprisingly, the edge length of a representative of a 3-pod cannot determine the diameter of the hyperbolic surface in which it is embedded unless it is a subgraph of a 4-pod. It will be explained how to construct continuous families of 3-pods to find those contained in $m$-pods with $m\geq 4$. 

\textbf{The symmetric 3-pod.} A symmetric graph is constructed as follows: Cut a 3-holed sphere with geodesic boundary along the shortest geodesic arcs $a_{1}$, $a_{2}$ and $a_{3}$ connecting pairs of geodesic boundary components to obtain a pair of isometric right-angled hexagons. Embed one of the hexagons in the hyperbolic plane, and extend the boundary arcs $a_{1}$, $a_{2}$ and $a_{3}$ into infinite geodesics. As shown in Figure \ref{centroid}, there is a unique circle intersecting each of these geodesics nontransversely in a single point. The center of the circle is contained in the hexagon, and is one of the vertices of our symmetric graph. The other vertex is obtained similarly. Note that in this symmetric example, the angles between a pair of edges are the same at both vertices.

\begin{figure}
\centering
\includegraphics[width=0.5\textwidth]{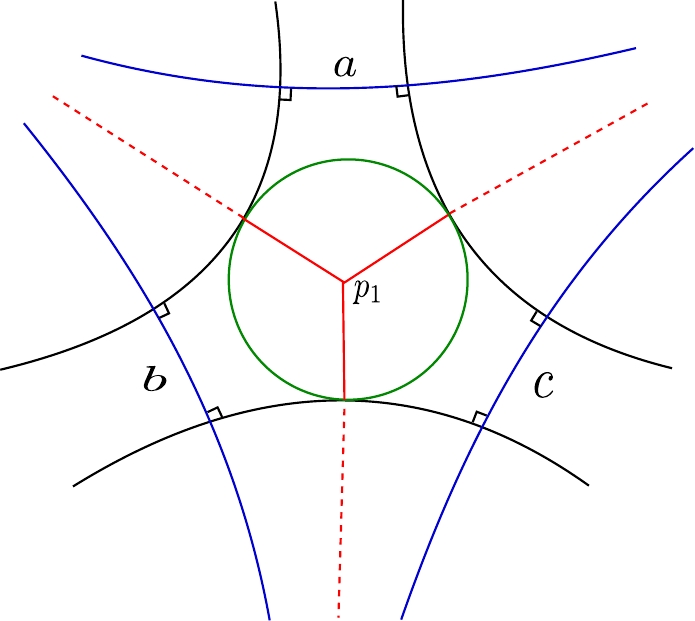}
\caption{Constructing a symmetric representative of a 3-pod.}
\label{centroid}
\end{figure}

\begin{rem}
\label{tripodrem}
A hyperbolic 3-holed sphere with geodesic boundary is determined up to isometry by the lengths $a$, $b$ and $c$ of the boundary components. When $ps_{3}$ is the symmetric 3-valence graph constructed above, it is the unique symmetric representative of a 3-pod, and we can define its length, $L(ps_{3})$, to be the length of its edges. Hyperbolic trigonometry gives
\begin{equation*}
L(ps_{3}) = \tanh^{-1} \sqrt{\frac{\left(\dfrac{\cosh \dfrac{c}{2} + \cosh \dfrac{a}{2} \cosh \dfrac{b}{2}}{\sinh \dfrac{a}{2} \sinh \dfrac{b}{2}} \right)^2- 1}{2 \coth \frac{a}{4} \coth \frac{b}{4} \dfrac{\cosh \dfrac{c}{2} + \cosh \dfrac{a}{2} \cosh \dfrac{b}{2}}{\sinh \dfrac{a}{2} \sinh \dfrac{b}{2}} - \coth^2 \frac{a}{4} - \coth^2 \frac{b}{4}}}
\end{equation*}
\end{rem}

\textbf{Deformability of 3-pods.} Choose two edges of the symmetric graph $ps_{3}$ and call them $e_{1}$ and $e_{2}$. The vertices will be denoted by $p_{1}$ and $p_{2}$. Suppose that concatenating the edges $e_{1}$ and $e_{2}$ gives a curve homotopic to the geodesic $a$ on the boundary of the 3-holed sphere containing the graph. Shift the vertex $p_{1}$ along a path $\gamma_{1}(t)$ whose tangent vector at every point makes equal angles with the edges $e_{1}$ and $e_{2}$. The parameter $t$ measures distance with respect to the hyperbolic metric. Suppose for $t>0$ this deformation increases the length of the edges $e_{1}$ and $e_{2}$, i.e. as shown in Figure \ref{deffig}, $\dot{\gamma}_{1}(t)$ is to the side of $e_{1}$ and $e_{2}$ with angle greater than $\pi$. The point $p_{2}$ is similarly shifted along a path $\gamma_{2}(t)$ whose tangent vector at every point makes equal angles with the edges $e_{1}$ and $e_{2}$. In addition we assume that for $t>0$ this deformation decreases the length of the edges $e_{1}$ and $e_{2}$. As shown in Figure \ref{deffig}, the paths $\gamma_{1}$ and $\gamma_{2}$ are contained in geodesics orthogonal to $a$. 

\begin{figure}
\centering
\includegraphics[width=0.8\textwidth]{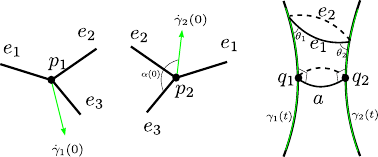}
\caption{The paths $\gamma_{1}$ and $\gamma_{2}$ along which the vertices $p_{1}$ and $p_{2}$ are moved.}
\label{deffig}
\end{figure}

To preserve the constraint that all three edges have the same length, $p_{2}$ is shifted along $\gamma_{2}$. This is possible because at $t=0$, $\dot{\gamma}_{2}$ makes an angle $\alpha(0)$ with $e_{3}$ as shown in Figure  \ref{deffig}, where $\frac{\pi}{2}<\alpha(0)<\pi$, and $\alpha(t)$ increases towards $\pi$ as $t$ increases. 

The different choices of the pair of edges $\{e_{1}, e_{2}\}$ each give different 1-parameter families of deformations; three in total.

For convenience, from now on the same symbol will be used for a  closed geodesic or edge and its length.

\begin{lem}
\label{calclem}
Let $a$ be the length of the closed geodesic in the homotopy class of curves containing the concatenation of the edges $e_{1}$ and $e_{2}$ and $\theta_{1}$ and $\theta_{2}$ the angles shown in Figure \ref{deffig}. Then
\begin{enumerate}
\item{The length of $e_{1}$ increases when both $\theta_{1}$ and $\theta_{2}$ are decreased.}
\item{When the sum of $\theta_{1}$ and $\theta_{2}$ is held constant, the only critical point of the length of $e_{1}$ occurs when $\theta_{1}=\theta_{2}$. This is a local minimum.}
\end{enumerate}
\end{lem}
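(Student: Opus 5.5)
Reading the setup from Figure \ref{deffig}: the edges $e_{1}$ and $e_{2}$ both run from $p_{1}$ to $p_{2}$, and their concatenation is freely homotopic to the closed geodesic $a$. Cutting along $a$ and lifting to the universal cover, I would model the configuration as follows. The lift of $a$ is a geodesic $\tilde a$ in $\mathbb{H}^{2}$, and the deck transformation $T$ translates along $\tilde a$ by distance $a$. We get lifts $\tilde p_{1}$ and $T\tilde p_{1}$ of $p_{1}$ at signed distance $r_{1}$ from $\tilde a$, and a lift $\tilde p_{2}$ at distance $r_{2}$ from $\tilde a$. The lifts of $e_{1}$ and $e_{2}$ are then the geodesic segments $\tilde p_{1}\tilde p_{2}$ and $\tilde p_{2}\,T\tilde p_{1}$.

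I would use Fermi coordinates (distance $r$ to $\tilde a$ and position $s$ along $\tilde a$) and the standard distance formula
\[
\cosh d = \cosh r_{1}\cosh r_{2}\cosh(\Delta s) - \sinh r_{1}\sinh r_{2}
\]
for two points on the same side of $\tilde a$. With $\Delta s = s$ for $e_{1}$ and $\Delta s = a-s$ for $e_{2}$, the lengths $e_{1}$ and $e_{2}$ are explicit functions of $(r_{1}, r_{2}, s)$. The angles $\theta_{1},\theta_{2}$ of Figure \ref{deffig} are the angles at $p_{1}$ and $p_{2}$ between the edges and the perpendicular paths $\gamma_{1},\gamma_{2}$ to $a$. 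I would express them through the trigonometry of the quadrilateral with two right angles, formed by the perpendicular feet on $\tilde a$ and the edge. Concretely, I would use $\tan$ of the angle in terms of $\sinh r$ and $\tanh(\Delta s)$ (the Lambert/trirectangle-type relations). This makes $(\theta_{1},\theta_{2})$ an alternative coordinate system for $(r_{1},r_{2})$ at fixed $s$ and $a$.

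For part (1), decreasing $\theta_{1}$ corresponds to pushing $p_{1}$ along $\gamma_{1}$ away from $a$, and similarly for $p_{2}$ in the appropriate direction, as in the description of the deformation above. The first variation formula gives $\frac{d}{dt}e_{1} = -\cos(\text{angle between }\dot\gamma\text{ and }e_{1})$ at each moving endpoint. Since both angles exceed $\pi/2$ in the relevant configuration, both contributions are positive, so $e_{1}$ increases. I would verify the sign conventions carefully against Figure \ref{deffig}.

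For part (2), I would use a Lagrange-multiplier style argument. Holding $\theta_{1}+\theta_{2}$ fixed means moving along a curve with $\dot\theta_{1}=-\dot\theta_{2}$. Using the explicit formula, I would write $e_{1}$ as a function of $u=\theta_{1}-\theta_{2}$ with $\theta_{1}+\theta_{2}=\sigma$ fixed. The first step is to observe the symmetry $u\mapsto -u$: swapping the roles of $p_{1}$ and $p_{2}$ is realised by the reflection in the perpendicular bisector of the segment, which preserves $a$ and the configuration. Hence $e_{1}(u)$ is even in $u$, so $u=0$ (i.e. $\theta_{1}=\theta_{2}$) is a critical point. To show it is the only critical point and a minimum, I would prove that $e_{1}$ is strictly convex in $u$. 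For this I would use the convexity of $\cosh d$ along such families, equivalently convexity of the distance function in $\mathbb{H}^{2}$ composed with the map $u\mapsto(r_{1}(u),r_{2}(u))$. A strictly convex even function has its unique critical point at $0$, and it is a minimum.

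The main obstacle is this last convexity claim, because the reparametrisation from $(r_{1},r_{2})$ to $(\theta_{1},\theta_{2})$ is nonlinear. As a fallback I would compute $\frac{d}{du}\cosh e_{1}$ directly and show that it has the sign of $u$. I expect this to reduce to monotonicity of a function of the form $\sinh r\,\tanh(\cdot)$ on $(0,\infty)$.
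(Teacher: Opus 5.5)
\textbf{There is a genuine gap: part (2) is not proved, and part (1) is looser than it looks.}

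Your reflection argument gives $e_1(\theta_1,\theta_2)=e_1(\theta_2,\theta_1)$, so $\theta_1=\theta_2$ is \emph{a} critical point on the line $\theta_1+\theta_2=\sigma$. Uniqueness and minimality, which are the actual content of the claim, rest on a convexity statement you do not establish. Convexity of distance on $\mathbb{H}^2\times\mathbb{H}^2$ only gives convexity of $e_1$ along paths that are affine in the heights $(r_1,r_2)$. The constraint curve $u\mapsto(r_1(u),r_2(u))$ is not affine, so nothing transfers. Your fallback computation is not carried out, and the predicted reduction to monotonicity of $\sinh r\tanh(\cdot)$ is a guess.

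In part (1), $\theta_1$ depends on both heights: raising $p_1$ alone lowers $\theta_1$ but raises $\theta_2$. So ``decrease both angles'' becomes ``raise both vertices'' only after you check the sign pattern of the inverse Jacobian of $(r_1,r_2)\mapsto(\theta_1,\theta_2)$. Moreover, the first-variation rates $\partial e_1/\partial r_i=\cos\theta_i$ are positive only for acute $\theta_i$, and nondegenerate configurations with an obtuse $\theta_i$ do exist.

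\textbf{The missing idea} is to write $e_1$ directly in the angle coordinates, which is the paper's route. Take the quadrilateral with right angles at the two feet on $a$, base $a/2$ (your $s$), and angles $\theta_1,\theta_2$ at $p_1,p_2$. Theorem 3.5.7 of \cite{Ratcliffe} gives
\[
\cosh e_1=\frac{\cos\theta_1\cos\theta_2+K}{\sin\theta_1\sin\theta_2},\qquad K=\cosh(a/2)>1 .
\]
Set $\sigma=\theta_1+\theta_2\in(0,\pi)$ and $u=\theta_1-\theta_2$. The product-to-sum formulas turn the above into
\[
\cosh e_1=1+\frac{2(K+\cos\sigma)}{\cos u-\cos\sigma}.
\]
The numerator is positive, and the denominator equals $2\sin\theta_1\sin\theta_2>0$. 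At fixed $\sigma$, the $u$-derivative is $2(K+\cos\sigma)\sin u/(\cos u-\cos\sigma)^2$, which has the sign of $u$ on $|u|<\sigma$. This is exactly what your fallback was aiming for: $u=0$ is the only critical point, and it is a minimum.

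For part (1),
\[
\partial_{\theta_1}\cosh e_1=-\frac{\cos\theta_2+K\cos\theta_1}{\sin^2\theta_1\sin\theta_2},
\]
and symmetrically for $\theta_2$. Decreasing both angles at unit rate changes $\cosh e_1$ at rate $\sin\sigma\,(K+\cos u)/(\sin^2\theta_1\sin^2\theta_2)>0$. If you prefer to stay in your height coordinates, use the identity
\[
\cos\theta_2+K\cos\theta_1=\frac{\sinh r_1\cosh r_2\sinh^2(a/2)}{\sinh e_1}.
\]
It shows that both partial derivatives are negative in every nondegenerate configuration, obtuse angles included.
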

\begin{proof}
Theorem 3.5.7 of \cite{Ratcliffe} states that 
\begin{equation*}
\cosh(e_{1})=\frac{\cos(\theta_{1})\cos(\theta_{2})+\cosh(\frac{a}{2})}{\sin(\theta_{1})\sin(\theta_{2})}.
\end{equation*}

The lemma is then proven by computing derivatives.
\end{proof}

Typically, for a graph representative of a 3-pod $P_{3}$ with vertices $p_{1}$ and $p_{2}$, it is possible to deform the vertices on a neighbourhood $B(P_{3})$ of $(p_{1}, p_{2})$ in $S_{g}\times S_{g}$ to obtain new graph representatives of the 3-pod. If the vertices cannot be deformed at all, the 3-pod is a subgraph of a rigid $m$-pod, with $m\geq 4$. The boundary of $B(P_{3})$ is reached when further deformation in a given direction does not result in a graph whose edge lengths are equal to the distance between the vertices.

\begin{cor}
\label{defcor}
Suppose the vertices of a graph representative $p_{0}$ of a 3-pod $P_{3}$ are in the interior of $B(P_{3})$. Just as for a non rigid symmetric graph, there are three 1-parameter families of representative graphs, on which the lengths of the edges are increasing away from $p_{0}$.
\end{cor}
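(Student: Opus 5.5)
The plan is to repeat the construction of the three deformation families for the symmetric graph $ps_{3}$, now starting from an arbitrary representative $p_{0}=(p_{1},p_{2})$ in the interior of $B(P_{3})$, and to use Lemma \ref{calclem} to control the edge lengths along each family. First I fix notation. Choose a pair of edges, say $\{e_{1},e_{2}\}$. Their concatenation is homotopic to a boundary geodesic $a$ of the 3-holed sphere determined by $P_{3}$. Let $\theta_{1},\theta_{2}$ be the angles the edges make with the common perpendicular to $a$, as in Figure \ref{deffig}. At a non-symmetric representative these angles need not be equal. So instead of requiring $\dot\gamma_{1}$ to bisect $e_{1}$ and $e_{2}$, I define $\gamma_{1}(t)$ by the condition that moving $p_{1}$ changes $e_{1}$ and $e_{2}$ at the same rate, increasing both. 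Since $p_{0}$ is in the interior of $B(P_{3})$, the two distance functions $d(\cdot,p_{2})$ along the two homotopy classes have linearly independent gradients at $p_{1}$. The set where they agree is therefore a smooth curve through $p_{1}$, by the implicit function theorem, and $\gamma_{1}$ is the branch on which both lengths increase.

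Second, I compensate at $p_{2}$ to keep all three edges equal, exactly as in the symmetric case. I move $p_{2}$ along a curve $\gamma_{2}$ that keeps $e_{1}=e_{2}$ and decreases them, and I choose the relative speeds so that $e_{3}$ matches $e_{1}=e_{2}$. This is solvable by the implicit function theorem provided $\dot\gamma_{2}$ makes an angle $\alpha$ with $e_{3}$ satisfying $\frac{\pi}{2}<\alpha<\pi$. I would get this from the geometry of Figure \ref{deffig}. The interior assumption on $p_{0}$ is precisely what prevents $\alpha$ from degenerating to $\pi/2$ or $\pi$, since either degeneration would mean we are at the boundary of $B(P_{3})$. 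The result is a one-parameter family $p(t)$ of representatives with $p(0)=p_{0}$, all edges of equal length, and edges that remain geodesic arcs realising the distance between the vertices for small $t$.

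Third, I check monotonicity. By Lemma \ref{calclem}(1), decreasing both $\theta_{1}$ and $\theta_{2}$ increases $e_{1}$. The deformation at $p_{1}$ does this, and the compensating motion at $p_{2}$ is second order relative to it. So the common edge length is strictly increasing in $t$, and running $t$ backwards gives the direction in which it decreases. The phrase ``increasing away from $p_{0}$'' I interpret as monotone along each branch, with the increasing direction distinguished. Part (2) of Lemma \ref{calclem} shows that when $\theta_{1}+\theta_{2}$ is fixed there is a unique critical configuration. This rules out the length being stationary along the family unless we are at the symmetric point, so the length stays strictly monotone. Repeating the construction for the other two pairs gives three families, and they are distinct because each preserves the equality of a different pair of edges first.

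The main obstacle is the second step: showing that the compensating motion of $p_{2}$ exists with the angle condition $\frac{\pi}{2}<\alpha<\pi$ at a non-symmetric representative. In the symmetric case this was read off from the picture. Here I would first try to derive it from the formula in the proof of Lemma \ref{calclem} applied to $e_{3}$ and the boundary curve $b$ or $c$, and to use the interiority of $p_{0}$ to exclude the endpoint angles.
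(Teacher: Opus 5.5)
Your construction of the three families is essentially the paper's: move $p_1$ along the bisecting direction that lengthens $e_1,e_2$, and compensate at $p_2$. The monotonicity step, however, has a genuine gap, because it rests on two claims that are false.

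First, interiority of $p_0$ in $B(P_3)$ does not force $\frac{\pi}{2}<\alpha<\pi$. The boundary of $B(P_3)$ is where the three edges stop realising the distance between the vertices, i.e. where a competing arc becomes equally short and a $4$-pod appears. That is a global condition, unrelated to the local angle between $\dot\gamma_2$ and $e_3$. The paper's proof explicitly treats interior representatives with $\alpha(0)<\frac{\pi}{2}$ and with $\alpha(0)=\frac{\pi}{2}$. So the inequality you hope to extract in Step 2 from the formula in Lemma~\ref{calclem} is not available.

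Second, the compensating motion of $p_2$ is first order, not second order. Let $c_1>0$ be the rate at which $e_1=e_2$ grow when $p_1$ moves at unit speed $s$ along $\gamma_1$. Let $c_2>0$ be the rate at which they shrink when $p_2$ moves at unit speed $u$ along $\gamma_2$. Let $\beta$ be the angle at $p_1$ between $\dot\gamma_1$ and $e_3$. The first variation formula gives $\partial_s e_3=-\cos\beta$ and $\partial_u e_3=-\cos\alpha$. Keeping $e_3=e_1$ then forces $(c_1+\cos\beta)\,ds=(c_2-\cos\alpha)\,du$, so the common length changes at rate
\[
\frac{de_1}{ds}=-\frac{c_1\cos\alpha+c_2\cos\beta}{c_2-\cos\alpha},
\]
not at rate $c_1$. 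Its sign depends jointly on $\alpha$ and $\beta$, and it vanishes, for instance, when $e_3$ is orthogonal to both $\gamma_1$ and $\gamma_2$. Your Step 3 already concedes that the length may be stationary at the symmetric graph; that could not happen if the correction at $p_2$ were negligible and the rate were always $c_1>0$.

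What is needed is a sign analysis, not a proof that $\alpha\in(\frac{\pi}{2},\pi)$, and this is what the paper does. It splits into cases according to whether $\alpha(0)$ and the angle between $e_3$ and $\dot\gamma_1$ are larger or smaller than $\frac{\pi}{2}$. When both are acute, it reverses the orientation of the path. In the right-angle cases it chooses which vertex plays the role of $p_1$: the one with the smaller or the larger angle between $e_1$ and $e_2$, according to the two configurations of Figure~\ref{deffig2}. It accepts that the increase is not first order when $e_3$ is orthogonal to both paths.

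Note also that Lemma~\ref{calclem}(2) concerns variations with $\theta_1+\theta_2$ held fixed. You have not shown your family is of that type, so it cannot be used to rule out stationarity along the family or to conclude strict monotonicity.
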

\begin{proof}
1-parameter families can be constructed in the same way as for the symmetric graph. It remains to show that such deformations increase edge length. By Lemma \ref{calclem}, when the condition $\frac{\pi}{2}<\alpha(0)$ (the angle $\alpha(0)$ is shown in Figure \ref{deffig}) is satisfied, it follows that the edge length is increasing away from $p_{0}$ in the 1-parameter family. This also increases the lengths of all the edges when $\alpha(0)<\frac{\pi}{2}$ and the angle between $e_{3}$ and $\dot{\gamma}_{1}$ is greater than $\frac{\pi}{2}$.In case $\alpha(0)<\frac{\pi}{2}$ and the angle between $e_{3}$ and $\dot{\gamma}_{1}$ is less than $\frac{\pi}{2}$, reversing the direction of theis path gives the required path in which the lengths are all increasing.

When $e_{3}$ makes angle $\pi/2$ with one or both of $\gamma_{1}$ or $\gamma_{2}$, we need to consider two distinct possibilities, as shown in Figure \ref{deffig2}. In the first case it follows from trigonometric arguments that it is possible to define the 1-parameter family along which the lengths of the edges all increase in the direction of increasing $t$, although this increase will not be to first order at a point at which $e_{3}$ is orthogonal to both $\gamma_{1}$ and $\gamma_{2}$ as indicated on the left side of Figure \ref{deffig2}. This is done in the usual way by choosing $p_{1}$ to be the vertex with the smaller angle between $e_{1}$ and $e_{2}$, and $p_{2}$ the angle with the larger angle between $e_{1}$ and $e_{2}$. In the second case, we need to choose $p_{1}$ to be the vertex with the larger angle between $e_{1}$ and $e_{2}$ and $p_{2}$ the vertex with the smaller angle between $e_{1}$ and $e_{2}$. This is shown on the right of Figure \ref{deffig2}. In this case, we see that $p_{1}$ is shifted a larger distance along $\gamma_{1}$ than $p_{2}$ along $\gamma_{2}$.
\end{proof}

\begin{figure}
\centering
\includegraphics[width=0.8\textwidth]{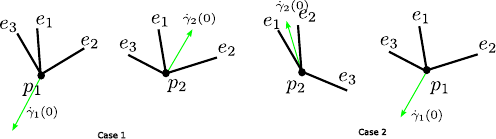}
\caption{The two cases in the proof of Corollary \ref{defcor}.}
\label{deffig2}
\end{figure}

\begin{cor}
If the vertices cannot be deformed at all, the 3-pod is a subgraph of a rigid $m$-pod, with $m\geq 4$.
\end{cor}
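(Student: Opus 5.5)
The plan is to argue by contraposition, using Corollary \ref{defcor} and the rigidity of the hexagon picture. Suppose $P_{3}$ has a graph representative $p_{0}=(p_{1},p_{2})$ whose vertices cannot be deformed at all, so that $B(P_{3})$ reduces to the single point $(p_{1},p_{2})$. We must show that some fourth geodesic arc $e_{4}$, from $p_{1}$ to $p_{2}$ and of length $d(p_{1},p_{2})$, exists and is disjoint from $e_{1},e_{2},e_{3}$ away from the vertices. Given such arcs, $P_{3}\cup\{e_{4},\dots ,e_{m}\}$ is an $m$-pod with $m\geq 4$, and the fact that it admits no deformation makes it rigid.

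\textbf{Step 1: the local deformations always exist.} For any triple of equal-length, pairwise non-homotopic geodesic arcs between $p_{1}$ and $p_{2}$, the three 1-parameter families from the construction before Lemma \ref{calclem} can be written down locally. Each moves $p_{i}$ along a geodesic orthogonal to the closed geodesic homotopic to $e_{j}\cup e_{k}$. Lemma \ref{calclem} and the angle analysis of Corollary \ref{defcor} then give a direction in which the three edge lengths remain equal and strictly increase, possibly only to second order in the orthogonal case of Figure \ref{deffig2}. So the only thing that can stop the deformation is the \emph{distance constraint}: the edge lengths must equal $d(p_{1},p_{2})$.

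\textbf{Step 2: what blocks the deformation.} Suppose that for every sufficiently small $t>0$, along each of the three families, the common edge length $\ell(t)$ exceeds $d(p_{1}(t),p_{2}(t))$. Then there is a geodesic arc $\beta_{t}$ from $p_{1}(t)$ to $p_{2}(t)$ with length less than $\ell(t)$, lying in a homotopy class different from those of $e_{1},e_{2},e_{3}$. The lengths of these arcs are bounded, so finiteness (as in Lemma \ref{curvelength}) lets us pass to a subsequence $t\to 0$ on which $\beta_{t}$ lies in one fixed homotopy class. Taking the limit gives a geodesic arc $e_{4}$ from $p_{1}$ to $p_{2}$ with
$$L(e_{4})\leq \ell(0)=d(p_{1},p_{2}).$$
Hence $L(e_{4})=d(p_{1},p_{2})$, and $e_{4}$ is a fourth minimising arc. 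Arcs that realise the distance between the same pair of points cannot cross, by the cut-and-paste argument of Remark \ref{basics}. So $e_{4}$ is disjoint from the other edges away from the vertices, and the edges together form an $m$-pod with $m\geq4$. Repeating this over all minimising classes, of which there are finitely many, collects every such arc.

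\textbf{Step 3: rigidity.} Any constrained homotopy of the resulting $m$-pod restricts to a constrained homotopy of the sub-3-pod $e_{1},e_{2},e_{3}$. By hypothesis that homotopy is constant, so the constrained homotopy class of the $m$-pod is a single point, and the $m$-pod is rigid.

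The main obstacle is Step 2. We need the blocking arc to be a \emph{new} class rather than a degeneration of $e_{1},e_{2},e_{3}$ themselves. We also need one fixed class $e_{4}$ to block every deformation direction, rather than different arcs blocking different directions. For the first point, note that the three arcs vary continuously and remain geodesics of length $\ell(t)$ in their own classes. For the second point, if different directions are blocked by different arcs, each arc is still a minimising fourth edge at $p_{0}$, which is enough to produce an $m$-pod. The second-order case of Figure \ref{deffig2} needs care, and there I would invoke the trigonometric argument from the proof of Corollary \ref{defcor} directly.
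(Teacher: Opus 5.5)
Your proposal is correct and follows essentially the paper's approach. The paper also argues that, absent a fourth minimising arc, the finiteness behind Lemma \ref{curvelength} gives a uniform gap $\epsilon>0$ between the edge length and every other arc joining the vertices, so a small move along a family from Corollary \ref{defcor} stays distance-realising; your subsequence-and-limit step is the sequential form of that gap argument, and your explicit non-crossing and rigidity checks fill in points the paper leaves implicit (one blocked sequence $t_k\to 0$ along a single family already suffices).
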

\begin{proof}
If a rigid 3-pod were not contained in a 4-pod, every edge $e_{4}$ connecting the vertices of the 3-pod would have length larger than the lengths of the edges of the 3-pod by an amount at least equal to some fixed $\epsilon>0$ not depending on $e_{4}$. This is a consequence of the finiteness result of Lemma \ref{curvelength}. Consequently, the 3-pod can be deformed by a small amount along any of the 1-parameter families from Corollary \ref{defcor} without violating the constraint that the edge length must be equal to the distance between the vertices.
\end{proof}

\begin{rem}
\label{defrem}
From Corollary \ref{defcor} we see that the deformation spaces of a fixed homotopy type of 3-pods with constant edge lengths are at most 1-dimensional. These deformations are obtained as a deformation along one of the 1-parameter families from Corollary \ref{defcor}, followed by a deformation backwards along another.
\end{rem}

The boundary of the set $B(P_{3})\subset S_{g}\times S_{g}$ contains pairs of vertices of compact 1-parameter families of 4-pods. Generically we expect that the edge lengths are not constant in these 1-parameter families. Similar to a linear programming problem, the maximal edge length of $P_{3}$ evaluated over $B$ occurs at the vertices. This motivates the next definition.

\begin{defn}[$n$-pod length]
Denote by $B(P_{n})(x)$ the compact, connected subset of $S_{g}\times S_{g}$ consisting of vertices of graph representatives of an $n$-pod $P_{n}$. The length $L(P_{n})(x)$ of $P_{n}$ at a point $x\in \mathcal{T}_{g}$ for which $B(P_{n})(x)$ is nonempty is given by the supremum of the edge length evaluated over vertices in $B(P_{n})(x)$.
\end{defn}

Since $B(P_{n})(x)$ is compact, the supremum in the definition of $L(P_{n})(x)$ is realised.

\subsection{The diameter locus}
\label{diameterlocussub}

\begin{prop}[Proposition \ref{DLL} from the Introduction]
For any $x\in \mathcal{T}_{g}$, $\mathcal{L}_{d}(x)$ is a smoothly embedded finite graph of dimension at most one embedded in the marked hyperbolic surface $S_{g}$ corresponding to $x$.
\end{prop}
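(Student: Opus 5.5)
We prove the statement by realising $\mathcal{L}_{d}(x)$ locally as a zero set of finitely many real-analytic functions, and then using Remark \ref{basics} to cut the dimension down to at most one. Strictly, $\mathcal{L}_{d}(x)$ is a subset of $S_{g}\times S_{g}$; we study it there and also through its projections to $S_{g}$.

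\textbf{Local description.} Fix a pair $(p_{1},p_{2})\in\mathcal{L}_{d}(x)$. By Lemma \ref{curvelength}, only finitely many geodesic arcs $a_{1},\dots,a_{k}$ of length $f_{\mathrm{dia}}(x)$ join $p_{1}$ to $p_{2}$, and $k\geq 3$ by Remark \ref{basics}. Every other homotopy class of arc is longer by at least some $\epsilon>0$. Lift to $\mathbb{H}^{2}$. On a small neighbourhood $U_{1}\times U_{2}$ of $(p_{1},p_{2})$, each $a_{i}$ gives a real-analytic function $\ell_{i}(q_{1},q_{2})=d_{\mathbb{H}^{2}}(\tilde q_{1},\gamma_{i}\tilde q_{2})$, and
\[
d(q_{1},q_{2})=\min_{i}\ell_{i}(q_{1},q_{2})
\]
on $U_{1}\times U_{2}$. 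Hence $\mathcal{L}_{d}(x)\cap(U_{1}\times U_{2})$ is the set where $\min_{i}\ell_{i}$ attains its global maximum $f_{\mathrm{dia}}(x)$. Since $q_{2}\mapsto\min_{i}\ell_{i}$ must have a local maximum, $\tilde q_{2}$ must lie in the convex hull of the directions to the $\gamma_{i}^{-1}\tilde q_{1}$, and similarly at $q_{1}$. So near $(p_{1},p_{2})$, the locus is a semi-analytic set cut out by equalities $\ell_{i}=\ell_{j}$ for indices in a subset $I$ with $|I|\geq3$, together with finitely many inequalities.

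\textbf{Dimension bound.} I would argue that for fixed $q_{1}$, the condition $\ell_{i}(q_{1},\cdot)=\ell_{j}(q_{1},\cdot)=\ell_{k}(q_{1},\cdot)$ forces $q_{2}$ to be the circumcentre of three lifts $\gamma_{i}^{-1}\tilde q_{1}$. This is isolated, and depends analytically on $q_{1}$. So the locus is locally a graph over a subset of $U_{1}$ of a real-analytic map $q_{1}\mapsto c(q_{1})$. Then one must rule out a 2-dimensional piece, i.e.\ an open set of $q_{1}$ with $d(q_{1},c(q_{1}))$ constant and maximal. For this I would use the deformation analysis of 3-pods in Corollary \ref{defcor} and Remark \ref{defrem}: a fixed 3-pod homotopy type has at most one-dimensional constant-length deformations, and along the three transverse 1-parameter families the edge length strictly increases. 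A 2-dimensional family of diameter pairs would contain a 3-pod sub-configuration deformable in two independent directions at constant length, which contradicts Remark \ref{defrem}. Equivalently, if length increases along one of the families of Corollary \ref{defcor}, the diameter is exceeded. This gives dimension $\leq 1$.

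\textbf{Finiteness and smoothness.} Being a semi-analytic set of dimension $\leq1$, the local piece is a finite union of points and analytic arcs (Łojasiewicz stratification). Compactness of $S_{g}\times S_{g}$ and closedness of $\mathcal{L}_{d}(x)$, which holds because $d$ is continuous, give a finite cover by such neighbourhoods. So $\mathcal{L}_{d}(x)$ is a finite graph whose edges are smooth, indeed analytic, arcs, and the same holds for its projections to $S_{g}$. Embeddedness of the arcs, meaning no self-crossings away from vertices, comes from the non-crossing property in Remark \ref{basics}.

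The main obstacle is the dimension step. One has to show rigorously that the equal-length constraints from at least three arcs, combined with maximality, leave at most one degree of freedom in $S_{g}\times S_{g}$. This requires checking that the relevant functions $\ell_{i}-\ell_{j}$ have independent differentials, or otherwise invoking the monotonicity in Lemma \ref{calclem} to exclude degenerate flat families. I would first try the circumcentre parametrisation above and compute the differential of $q_{1}\mapsto d(q_{1},c(q_{1}))$ using Lemma \ref{calclem}.
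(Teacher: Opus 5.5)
Your proposal is correct relative to the paper, and its decisive step uses the same input as the paper. The paper's entire proof is that Remark \ref{defrem} (constant-length deformations of a fixed 3-pod type are at most one-dimensional) rules out a two-dimensional piece of $\mathcal{L}_{d}(x)$, with $m$-pods for $m>3$ inheriting the bound. Your dimension paragraph is exactly this argument.

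You genuinely differ on finiteness and smoothness. The paper reads smoothness off the explicit description of the deformation spaces in Remark \ref{defrem}, and does not argue finiteness explicitly. You obtain both from four ingredients:
\begin{itemize}
\item real-analyticity of the lifted distances $\ell_{i}$;
\item the circumcentre description of $\{\ell_{i}=\ell_{j}=\ell_{k}\}$ as an analytic graph over $U_{1}$;
\item the local structure of one-dimensional semi-analytic sets;
\item compactness of $S_{g}\times S_{g}$.
\end{itemize}
The paper's route is shorter but rests everything on the fairly informal picture of the 3-pod deformation families. Yours makes the ``finite graph with smooth edges'' part rigorous independently of that picture.

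Your framework also removes the obstacle you flag at the end. You do not need independence of the differentials of $\ell_{i}-\ell_{j}$. On each component of the analytic surface $\{(q_{1},c_{ijk}(q_{1}))\}$, the common edge length is a real-analytic function. It is non-constant as soon as it strictly increases at a single point along one of the families of Corollary \ref{defcor}. Its level set at $f_{\mathrm{dia}}(x)$ is therefore an analytic set of dimension at most one.

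One correction: embeddedness does not come from Remark \ref{basics}. That remark concerns the diameter-realising geodesic arcs, not the arcs of the locus itself. In $S_{g}\times S_{g}$ embeddedness is automatic for a subset. For the projections to $S_{g}$, declare the finitely many crossings and possible cusps of the projected analytic arcs to be vertices.
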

\begin{proof}
This is a consequence of Remark \ref{defrem}. Since the deformation spaces of 3-pods with constant edge lengths are at most 1-dimensional, the deformation spaces of $m$-pods with $m>3$ cannot have dimension greater than 1. Smoothness of any edges of $\mathcal{L}_{d}$ also follows from the explicit description of the deformation spaces from Remark \ref{defrem}.
\end{proof}

\begin{lem}
\label{LFL}
For every point $x\in \mathcal{T}_{g}$ there is a neighbourhood $\mathcal{N}(x)$ in $\mathcal{T}_{g}$ for which $f_{\mathrm{dia}}$ is obtained as
\begin{equation}
\label{eqn1}
\text{-}f_{\mathrm{dia}}(y)=\inf\{\text{-}L(P_{m})\ |\ P_{m}\text{ is an }m\text{-pod contained in }\mathcal{G}_{d}(x)\}
\end{equation}
for every $y$ in $\mathcal{N}(x)$. Moreover, the set of $m$-pods, $m\geq 4$, contained in $\mathcal{G}_{d}(x)$ is finite.
\end{lem}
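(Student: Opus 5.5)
The proof has two parts, and I would do the finiteness statement first, since the neighbourhood statement relies on it.

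\textbf{Finiteness.} By Proposition \ref{DLL}, $\mathcal{L}_{d}(x)$ is a finite graph of dimension at most one. After the subdivision in the definition of $\mathcal{G}_{d}(x)$, this gives a finite vertex set. For each pair of vertices $(p_{1},p_{2})$, the edges of $\mathcal{G}_{d}(x)$ joining them are geodesic arcs of length $f_{\mathrm{dia}}(x)$. Any two such arcs concatenate to a closed curve of length at most $2f_{\mathrm{dia}}(x)$, as in Remark \ref{basics}. By Lemma \ref{curvelength} there are only finitely many such curves, so there are only finitely many homotopy classes of such arcs. Hence $\mathcal{G}_{d}(x)$ has finitely many edges. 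An $m$-pod contained in $\mathcal{G}_{d}(x)$ is determined by a pair of vertices together with a choice of $m$ edges between them. It follows that there are finitely many $m$-pods with $m\geq 4$.

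\textbf{Neighbourhood statement.} I would argue by contradiction, mirroring the local finiteness proof for systoles.
\begin{enumerate}
\item Suppose $y_{k}\to x$ and, at each $y_{k}$, the diameter is realised by a pair $(q_{k},q'_{k})$ whose diameter-realising $m$-pod $P^{(k)}$ is not contained in $\mathcal{G}_{d}(x)$. Remark \ref{basics} guarantees at least three realising arcs, and Corollary \ref{defcor} together with the definition of $n$-pod length lets us take $P^{(k)}$ to be a rigid $m$-pod with $m\geq 4$, or an extreme point of $B(P_{3})$.
\item Identify nearby surfaces with $S_{g}(x)$ by $(1+\epsilon_{k})$-bilipschitz maps, using continuity of the hyperbolic metric in Fenchel--Nielsen coordinates. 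Pass to a subsequence on which $(q_{k},q'_{k})\to(q,q')$. Since $f_{\mathrm{dia}}$ is continuous (Proposition \ref{PWS}), $d(q,q')=f_{\mathrm{dia}}(x)$, so $(q,q')\in\mathcal{L}_{d}(x)$.
\item The edges of $P^{(k)}$ have uniformly bounded length, about $f_{\mathrm{dia}}(x)$. By the finiteness of Lemma \ref{curvelength}, after a further subsequence their homotopy classes (rel endpoints moving in a small neighbourhood) are constant.
\item The geodesic representatives of these edges converge to arcs of length $f_{\mathrm{dia}}(x)$ joining $q$ to $q'$. These are edges of $\mathcal{G}_{d}(x)$, or lie in the homotopy classes of such edges when $\mathcal{L}_{d}(x)$ is one-dimensional and homotopies slide endpoints along $\mathcal{L}_{d}(x)$.
\item The limiting arcs form an $m$-pod contained in $\mathcal{G}_{d}(x)$, giving the contradiction.
\end{enumerate}

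\textbf{Main obstacle.} The hard step is item 4: making sure the limit does not degenerate and really is an $m$-pod contained in $\mathcal{G}_{d}(x)$. Two things can go wrong. First, distinct edges of $P^{(k)}$ might converge to the same arc. Second, the limit pair might be a non-vertex point of a one-dimensional $\mathcal{L}_{d}(x)$.

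For the first problem, I would use the fact that there are no bigons (Remark \ref{basics}). Two distinct homotopy classes rel endpoints stay distinct under small perturbation of the endpoints, so distinct edges cannot collapse in the limit.

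For the second problem, I would use the subdivision built into the definition of $\mathcal{G}_{d}(x)$ together with the one-dimensional deformation spaces of Remark \ref{defrem}. These let me slide the limiting pair to a vertex of $\mathcal{G}_{d}(x)$ within the constrained homotopy class.

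With that in place, equation \eqref{eqn1} follows. For every $y\in\mathcal{N}(x)$, the infimum defining $-f_{\mathrm{dia}}(y)$ is attained by one of the finitely many $m$-pods found above, evaluated through its length $L(P_{m})(y)$.
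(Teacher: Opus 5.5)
Your proof is correct at the paper's level of rigour, but it is organised differently from the paper's.

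\textbf{Comparison of routes.} The paper argues directly with explicit constants, in three steps:
\begin{enumerate}
\item Pods contained in $\mathcal{G}_{d}(x)$ are finite in number, since each determines $n(n-1)/2$ curves of length less than $2f_{\mathrm{dia}}(x)$.
\item Pods whose domain contains $x$ but which are not contained in $\mathcal{G}_{d}(x)$ have length below $f_{\mathrm{dia}}(x)$ by a uniform $\epsilon_{1}$.
\item Only finitely many other pods could realise the diameter at some $y$ near $x$. Each has closed domain, hence lies at distance at least $\epsilon_{3}$ from $x$.
\end{enumerate}
You instead run a single compactness argument. You take realising pods at $y_{k}\to x$, use bounded length to stabilise the homotopy classes of their edges, and show the limit is a pod at $x$ contained in $\mathcal{G}_{d}(x)$.

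Your limit step does the work of both the $\epsilon_{1}$-gap and the closedness of domains, which the paper asserts without proof. Your finiteness count is also more explicit than the paper's passage from curves to pods: a finite vertex set from Proposition \ref{DLL}, plus finitely many length-$f_{\mathrm{dia}}(x)$ arcs between fixed endpoints. The paper's version, in exchange, gives a quantitative neighbourhood and mirrors the standard local-finiteness proof for systoles.

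\textbf{Repairs.}
\begin{itemize}
\item Do not cite Proposition \ref{PWS} for continuity of $f_{\mathrm{dia}}$. The paper derives that proposition from this very lemma, so the citation is circular. Continuity follows at once from your $(1+\epsilon_{k})$-bilipschitz comparison, which changes all distances, and hence the diameter, by at most a factor $1+\epsilon_{k}$.
\item For finiteness of arcs between fixed endpoints, proper discontinuity is the cleaner justification: there are finitely many lifts of $p_{2}$ in a ball around a lift of $p_{1}$. Lemma \ref{curvelength} concerns closed curves, so it does not apply directly.
\item In the one-dimensional case, sliding the limit pair to a vertex via Remark \ref{defrem} is the weakest step. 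It is not clear that all $m$ edges keep length $f_{\mathrm{dia}}(x)$ as the pair moves along $\mathcal{L}_{d}(x)$. However, the paper's definition of $\mathcal{G}_{d}(x)$ in that case is equally vague, and its proof does not treat this case separately.
\end{itemize}
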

\begin{proof}
Recall that an $n$-pod $P_{n}$ whose domain contains $x\in \mathcal{T}_{g}$, determines a set of $n(n-1)/2$ curves, each with length less than $2f_{\mathrm{dia}}(x)$ on $S_{g}$. Consequently, by the same argument as in the proof of Lemma \ref{curvelength}, the set of $m$-pods with $m\geq 4$ contained in $\mathcal{G}_{d}(x)$ is finite.

If $P_{n}$ is not contained in $\mathcal{G}_{d}(x)$, the length of $P_{n}$ is less than $f_{\mathrm{dia}}(x)$. The same finiteness as in the first part of the proof then implies that there is an $\epsilon_{1}>0$ such that for every $n$-pod not contained in $\mathcal{G}_{d}(x)$ but whose domain contains $x$ satisfies $L(P_{n})(x)<f_{\mathrm{dia}}(x)-\epsilon_{1}$.

Now choose an $\epsilon>0$ and consider the set $\sigma$ of $n$-pods for which $x$ is not in the domain, but whose length is equal to $f_{\mathrm{dia}}(y)$ at a point $y\in \mathcal{T}_{g}$ within an $\epsilon$-neighbourhood of $x$. Within this $\epsilon$-neighbourhood, $f_{\mathrm{dia}}$ is bounded, and for every point there is a map of the corresponding hyperbolic surface to $S_{g}$ that distorts distances by no more than some bounded factor. Consequently, the homotopy class of graphs representing an element $P_{n}$ of $\sigma$ determines a set of $n(n-1)/2$ curves on $S_{g}$ of length less than $f_{\mathrm{dia}}(x)+\epsilon_{2}$, where $\epsilon_{2}$ can be chosen to work for every element of $\sigma$. Hence $\sigma$ is a finite set. For every element $P_{n}$ of $\sigma$, the domain of $P_{n}$ is closed, and hence has Hausdorff distance from $x$ in $\mathcal{T}_{g}$ bounded from below by some $\epsilon_{3}>0$.
\end{proof}

\begin{prop}[Proposition \ref{PWS} of the Introduction]
$f_{\mathrm{dia}}$ is a piecewise smooth function $\mathcal{T}_{g}\rightarrow \mathbb{R}_{+}$.
\end{prop}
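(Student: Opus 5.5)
The plan is to use Lemma \ref{LFL} to reduce the statement to a finite minimum of smooth functions on a neighbourhood of each point. Fix $x\in\mathcal{T}_{g}$ and take the neighbourhood $\mathcal{N}(x)$ from Lemma \ref{LFL}. On $\mathcal{N}(x)$, $-f_{\mathrm{dia}}$ is the infimum of $-L(P_{m})$ over the $m$-pods contained in $\mathcal{G}_{d}(x)$. Among these, those with $m\geq 4$ form a finite set. A 3-pod contained in $\mathcal{G}_{d}(x)$ either deforms (Corollary \ref{defcor}), or is a subgraph of a rigid $m$-pod with $m\geq 4$. In the first case its length is, by the definition of $n$-pod length, realised at the boundary of $B(P_{3})$, which consists of vertex pairs of 4-pods. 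Either way, its length is dominated by the length of one of the finitely many $m$-pods with $m\geq4$. So, after shrinking $\mathcal{N}(x)$, I would write
\[
f_{\mathrm{dia}}(y)=\max\{L(P^{1})(y),\dots,L(P^{k})(y)\},\qquad y\in\mathcal{N}(x),
\]
for finitely many $m$-pods $P^{i}$, $m\geq 4$.

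Next I would show that each $L(P^{i})$ is smooth near $x$, with a separate argument in the rigid and non-rigid cases.

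\textbf{Rigid case.} The vertices $(p_{1},p_{2})\in S_{g}\times S_{g}$ and the common edge length $\ell$ are cut out by the equations $\ell_{e_{j}}(p_{1},p_{2},y)=\ell$, $j=1,\dots,m$. In the universal cover these are $\cosh$ of hyperbolic distances between lifts, which depend real-analytically on $(p_{1},p_{2})$ and on Fenchel--Nielsen coordinates of $y$. There are five unknowns ($p_{1}$, $p_{2}$, $\ell$) and $m\geq4$ equations. For a rigid $4$-pod I would apply the implicit function theorem to the first $4$ equations together with one further condition, and I expect rigidity to give the needed nondegeneracy of the Jacobian in the $(p_{1},p_{2},\ell)$ directions. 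This would make $(p_{1}(y),p_{2}(y),\ell(y))$ smooth in $y$.

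\textbf{Non-rigid case.} Here $B(P^{i})(y)$ is a one-dimensional family (Remark \ref{defrem}). Its maximal edge length is attained at an endpoint where a further edge appears, so it coincides with the length of a rigid $(m+1)$-pod. This is again covered by the rigid case, or by a smooth critical value if the maximum is interior.

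Piecewise smoothness then follows because a maximum of finitely many smooth functions is continuous and piecewise smooth. The pieces are the closures of the regions $\{y : L(P^{i})(y)\geq L(P^{j})(y)\ \forall j\}$, and these are semianalytic because each $L(P^{i})$ is real-analytic.

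The hardest step is the transversality needed in the rigid case. Rigidity only says the constrained homotopy class is a single point; it does not by itself say that the linearised system is nondegenerate. My first attempt would be to show directly that the $4\times5$ matrix of partial derivatives of the edge lengths has full rank. Its columns are given by unit tangent vectors of the edges at $p_{1}$ and $p_{2}$, via the first variation formula. Full rank should follow if the outgoing edge directions at each vertex are not all contained in a closed half-plane, which is what Remark \ref{basics} and the maximality of the distance suggest. If that fails at degenerate configurations, I would instead prove only continuity and one-sided smoothness of $L(P^{i})$ there, using real-analyticity of the constraint set and the Łojasiewicz structure of semianalytic sets.
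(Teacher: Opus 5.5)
Your overall route matches the paper's. You reduce via Lemma~\ref{LFL} to finitely many pods, argue that each pod length is smooth where it is defined, and conclude that $f_{\mathrm{dia}}$ is smooth except where the realising pod changes. The paper simply asserts the middle step: pod lengths vary smoothly in the interior of their domains. The trouble is that your attempt to prove that step does not work as set up.

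For $m=4$ you have four equations $\ell_{e_j}(p_1,p_2,y)=\ell$ in the five unknowns $(p_1,p_2,\ell)$. The ``one further condition'' needed for the implicit function theorem is never identified. Worse, the full rank you propose to prove is incompatible with rigidity. If the $4\times5$ Jacobian has rank $4$, the four equations cut out a smooth curve of equal-length configurations. Along that curve the edges stay embedded and distance-realising unless a fifth arc already has the same length. So a rigid 4-pod falls into one of two cases:
\begin{enumerate}
\item Its Jacobian is degenerate, and the implicit function theorem is unavailable.
\item It lies inside a 5-pod. Then the missing equation is the fifth edge, and you need invertibility of the $5\times5$ matrix with rows $(u_j(p_1),u_j(p_2),1)$, which rigidity does not provide.
\end{enumerate}
Your half-plane criterion cannot supply the rank either. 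It concerns positive relations $\sum\lambda_j u_j=0$, whereas rank concerns arbitrary relations with $\sum c_j=0$. For example, if $u_1=-u_3$ and $u_2=-u_4$ at both vertices, the directions positively span at each vertex, yet $(1,-1,1,-1)$ is a linear relation among the rows.

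The second gap is the formula $f_{\mathrm{dia}}=\max_i L(P^i)$ on a whole neighbourhood of $x$, with each $L(P^i)$ smooth there. This conflates a pod with the smooth continuation of its equal-length configuration. For $m\geq 6$, e.g.\ the $8$-pod of the Bolza surface, the system is overdetermined, and the configuration generally does not survive perturbation of $y$ at all. A continued $5$-edge configuration exists as a pod at $y$ only while its edges still realise the distance between its vertices. Once another arc becomes shorter, the common edge length is no longer a distance, so nothing bounds it by $f_{\mathrm{dia}}(y)$. This is exactly how an infeasible basic solution of a linear program can exceed the optimum, and it means the maximum over continuations can overshoot. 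That is why the paper claims smoothness only in the interior of each pod's domain and allows mere continuity where the realising pod changes. Your argument needs that domain bookkeeping, not just ``a maximum of finitely many smooth functions''.

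One way to close both gaps without any rigidity or transversality is the tool you keep only as a fallback. By proper discontinuity, uniformly for $y$ near $x$, one can write
$f_{\mathrm{dia}}(y)=\max_{\tilde p,\tilde q\in K}\min_{\gamma\in E} d_{\mathbb{H}^2}(\tilde p,\rho_y(\gamma)\tilde q)$.
Here $K\subset\mathbb{H}^2$ is a fixed compact set surjecting onto every nearby surface, $\rho_y$ is the holonomy (real-analytic in Fenchel--Nielsen coordinates), and $E$ is finite. Then $f_{\mathrm{dia}}$ is a continuous subanalytic function, hence analytic on the strata of a subanalytic stratification.
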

\begin{proof}
 The length of an $m$-pod $P_{n}$ varies smoothly in the interior of its domain, except where the vertex of $B(P_{n})$ representing the maximiser changes, where the length might only vary continuously. On the boundary of a set on which the infimum in Equation \eqref{eqn1} is realised by a fixed $m$-pod,  $f_{\mathrm{dia}}$ might only be continuous. The proposition then follows from Lemma \ref{LFL}.
\end{proof}



\section{Convexity}
\label{tripodsec}
The purpose of this section is to prove the main theorem, namely that $f_{\mathrm{dia}}$ is a topological Morse function. By a result of \cite{ConvexTMF}, this amounts to showing that $n$-pod lengths, where defined on $\mathcal{T}_{g}$, satisfy a convexity property analogous to but weaker than convexity of length functions on $\mathcal{T}_{g}$. The argument is a generalisation of the result in \cite{NRP} that length functions are convex along earthquake paths.

Geodesic laminations are defined and studied in Chapters 8 and 9 of \cite{Th}. A left earthquake path $\mathcal{E}_{\nu}:\mathbb{R}\rightarrow \mathcal{T}_{g}$ is a bi-infinite path generalising the notion of a 1-parameter family of twist deformations around a multicurve to a shearing deformation determined by a measured geodesic lamination $\nu$. This is defined via a limiting process, the details of which are given in Section 2 of \cite{NRP}.

\begin{thm}[Thurston, proof given in the appendix of \cite{NRP}]
\label{th1}
For every $x$ and $y$ in $\mathcal{T}_{g}$, there is a unique left earthquake path from $x$ to $y$.
\end{thm}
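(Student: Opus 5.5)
The plan is to follow Kerckhoff's approach rather than Thurston's original construction on the circle at infinity. Fix $x\in\mathcal{T}_{g}$ and consider the \emph{earthquake map}
\begin{equation*}
E_{x}:\mathcal{ML}\rightarrow\mathcal{T}_{g},\qquad \nu\mapsto \mathcal{E}_{\nu}(1),
\end{equation*}
where $\mathcal{E}_{\nu}$ is the left earthquake path starting at $x$ determined by $\nu$. The scaling $t\nu$ recovers $\mathcal{E}_{\nu}(t)$, so a left earthquake path from $x$ to $y$ is the same thing as a preimage of $y$ under $E_{x}$. The theorem therefore amounts to showing that $E_{x}$ is a bijection.

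Both $\mathcal{ML}$ and $\mathcal{T}_{g}$ are homeomorphic to $\mathbb{R}^{6g-6}$, so it suffices to establish three properties of $E_{x}$:
\begin{enumerate}
\item $E_{x}$ is continuous.
\item $E_{x}$ is injective.
\item $E_{x}$ is proper.
\end{enumerate}
Invariance of domain then makes $E_{x}$ an open map, so its image is open. Properness makes the image closed. Since $\mathcal{T}_{g}$ is connected, $E_{x}$ is onto, which gives existence; injectivity gives uniqueness.

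\textbf{Continuity.} This comes from the limiting construction of Section 2 of \cite{NRP}. Earthquakes along weighted multicurves are twist deformations and depend continuously on the weights. Weighted multicurves are dense in $\mathcal{ML}$, and the approximating earthquakes converge uniformly on compact sets in the parameter.

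\textbf{Properness.} I would bound lengths from below using the first-variation formula. Along $\mathcal{E}_{t\nu}$, the derivative of $L(c)$ is $\int\cos\theta\, d\nu$, and this derivative is nondecreasing in $t$, by the convexity in \cite{NRP}. Together with a standard comparison, this should give an estimate of the form
\begin{equation*}
L(c)(E_{x}(\nu))\geq i(c,\nu)-K(c,x).
\end{equation*}
Fix a finite filling set of curves $c_{1},\dots,c_{k}$. The function $\nu\mapsto\sum_{j} i(c_{j},\nu)$ is proper on $\mathcal{ML}$. Hence if $\nu$ leaves every compact set, some $L(c_{j})$ is unbounded, so $E_{x}(\nu)$ leaves every compact set of $\mathcal{T}_{g}$.

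\textbf{Injectivity (main obstacle).} Suppose $E_{x}(\nu)=E_{x}(\mu)=y$ with $\nu\neq\mu$. I would first try Kerckhoff's argument. Compose the earthquake $y\to x$ along $-\nu$ (the right earthquake) with the earthquake $x\to y$ along $\mu$. Then, for a curve or lamination $c$ chosen to distinguish $\mu$ from $\nu$, compare derivatives of $L(c)$ at the endpoints using the cosine formula. Strict convexity of length along earthquake paths whenever $i(c,\nu)>0$ should force the two expressions for the difference $L(c)(y)-L(c)(x)$ to be incompatible.

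A good choice of $c$ is the support of one of the laminations itself. Lengths of $\nu$ are constant along $\mathcal{E}_{\nu}$, since $\nu$ is fixed by its own earthquake, while along $\mathcal{E}_{\mu}$ the length of $\nu$ strictly increases unless $i(\nu,\mu)=0$. The residual case $i(\nu,\mu)=0$ I would handle separately. There the two laminations have compatible supports, the earthquakes commute, and the equation $E_{x}(\nu)=E_{x}(\mu)$ reduces to twisting along disjoint leaves, which is free.

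If this length comparison proves too delicate, the fallback is Thurston's geometric uniqueness argument in the universal cover. One shows that the stratum structure of an earthquake map (its gaps and fault lines) is determined by the boundary homeomorphism $\partial\mathbb{H}^{2}\to\partial\mathbb{H}^{2}$ conjugating the two Fuchsian groups. This is because each stratum is characterised as the convex hull of points on which the homeomorphism agrees with a single M\"obius transformation.
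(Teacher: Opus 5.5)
The paper gives no argument of its own here; it cites Kerckhoff's appendix to \cite{NRP}. Your skeleton is the skeleton of that proof: continuity, properness and injectivity of $E_x:\mathcal{ML}\to\mathcal{T}_g$, followed by invariance of domain. The gap is in injectivity, which is the heart of the argument.

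Your key claim is that $L(\nu)$ strictly increases along $\mathcal{E}_\mu$ whenever $i(\nu,\mu)>0$. This is false. The angle monotonicity of Proposition \ref{Kercklem} gives strict \emph{convexity}, not monotonicity, and the initial derivative $\iint\cos\theta\,d\nu\,d\mu$ can be negative. For example, a twist along $\gamma$ first shortens a curve crossing $\gamma$ once if you start on the appropriate side of that curve's length minimum. More structurally, at each intersection point the anticlockwise angle from $\nu$ to $\mu$ and the one from $\mu$ to $\nu$ are supplementary, so the cosine formula gives
\[
\left.\frac{d}{dt}\right|_{t=0} L(\nu)\bigl(\mathcal{E}_{t\mu}(x)\bigr)=-\left.\frac{d}{dt}\right|_{t=0} L(\mu)\bigl(\mathcal{E}_{t\nu}(x)\bigr).
\]
Hence the monotonicity you want cannot hold for both orderings of the pair. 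Your vaguer first version, comparing an arbitrary $c$ along the two paths, gives no contradiction either: two convex functions with the same endpoint values are perfectly compatible.

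The missing idea is to combine this antisymmetry with convexity and the invariance of $L(\mu)$ under $\mathcal{E}_\mu$. Suppose $E_x(\mu)=E_x(\nu)=y$ and $i(\mu,\nu)>0$. Then $L(\mu)(y)=L(\mu)(x)$, so strict convexity of $L(\mu)$ along $\mathcal{E}_{t\nu}(x)$, $t\in[0,1]$, forces its derivative at $t=0$ to be negative. Symmetrically, the derivative of $L(\nu)$ along $\mathcal{E}_{t\mu}(x)$ at $t=0$ is negative. By the display these two numbers sum to zero, which is a contradiction.

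The other steps also need repair.

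\emph{The case $i(\mu,\nu)=0$.} Saying that twisting along disjoint leaves is free is not an argument when $\mu$ and $\nu$ are different transverse measures on the same minimal lamination, since there is no Fenchel--Nielsen twist coordinate there. Instead, commutativity gives $E_x(n\mu)=E_x(n\nu)$ for all $n\in\mathbb{N}$. Since the angles tend to $0$ along an earthquake ray, $L(c)(E_x(n\mu))/n\to i(c,\mu)$. Therefore $i(c,\mu)=i(c,\nu)$ for every curve $c$, and so $\mu=\nu$.

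\emph{Properness.} Monotonicity of the derivative only yields $L(c)(E_x(\nu))\geq L(c)(x)-i(c,\nu)$, because the initial derivative can be as low as $-i(c,\nu)$. The bound $i(c,\nu)-K(c,x)$ with $K$ independent of $\nu$ needs a uniform estimate you have not supplied. A compactness argument suffices:
\begin{enumerate}
\item Write $\nu_n=s_n\nu_n'$ with $\nu_n'\to\nu'\neq 0$ and $s_n\to\infty$, and pick $c$ with $i(c,\nu')>0$.
\item Choose $T$ so that the derivative of $L(c)$ along $\mathcal{E}_{t\nu'}(x)$ at $t=T$ exceeds $i(c,\nu')/2$. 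This is possible because that derivative tends to $i(c,\nu')$.
\item By continuity in the lamination, the corresponding derivative along $\mathcal{E}_{t\nu_n'}(x)$ stays above $i(c,\nu')/4$ for large $n$.
\item Monotonicity in $t$ then gives $L(c)(E_x(\nu_n))\geq (s_n-T)\,i(c,\nu')/4\to\infty$.
\end{enumerate}

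\emph{Continuity.} Density of weighted multicurves plus convergence in $t$ does not by itself give continuity in $\nu$. You need the joint continuity proved in \cite{NRP}.
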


A function $f$ is \textit{convex} along $\mathcal{E}_{\nu}$ if for every $t_{0}\in \mathbb{R}$ and every $t\in (0,1)$ we have
\begin{equation*}
f\circ \mathcal{E}_{\nu}(t+t_{0})\leq t f\circ \mathcal{E}_{\nu}(t_{0})+(1-t)f\circ \mathcal{E}_{\nu}(t_{0}+1)
\end{equation*}
The function $f$ is strictly convex along $\mathcal{E}_{\nu}$ if strict inequality holds in the above equation for every $t_{0}\in \mathbb{R}$ and every $t\in(0,1)$. Equivalently, $f$ is strictly convex along $\mathcal{E}_{\nu}$ at $t=t_{0}$ if
$$\frac{d^{2}f\circ \mathcal{E}_{\nu}}{dt^{2}}(t_{0})>0 $$

The proof of convexity of length functions along earthquake paths in \cite{NRP} relied heavily on the following proposition.

\begin{prop}[Proposition 3.5 of \cite{NRP}]
\label{Kercklem}
For every intersection of a geodesic $\gamma$ on $S_{g}$ with $\nu$, the angle of intersection $\theta(t)$ measured anticlockwise from $\gamma$ to $\nu$ is strictly decreasing as a function of $t$ along the earthquake path $\mathcal{E}_{\nu}(t)$.
\end{prop}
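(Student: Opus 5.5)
The plan is to follow Kerckhoff's approach. First prove the statement when $\nu$ is a weighted simple closed multicurve, where the earthquake is an explicit Fenchel--Nielsen twist. Then pass to a general measured lamination by approximation.

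Because earthquakes form a flow, $\mathcal{E}_{\nu}(t+s)=\mathcal{E}_{\nu(t)}(s)\circ \mathcal{E}_{\nu}(t)$, where the lamination is transported along the path. So it suffices to show that $\theta(t)<\theta(0)$ for every small $t>0$, at every intersection point and for every starting surface.

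\textbf{Multicurve case.} Work in the universal cover $\mathbb{H}^2$. Fix an intersection point of $\gamma$ with a component of $\nu$, and choose lifts $\tilde\gamma$ and $\tilde\nu_0$ crossing at a point $\tilde p$. Normalise the developing map of $\mathcal{E}_{\nu}(t)$ so that the complementary region (a stratum) on one side of $\tilde\nu_0$ is held fixed. Then every lift of $\nu$ crossed by $\tilde\gamma$ beyond $\tilde\nu_0$ is sheared to the left by the amount of its weight times $t$, composed with all the shears of the lifts between it and $\tilde\nu_0$.

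The geodesic $\tilde\gamma_t$ representing $\gamma$ on $\mathcal{E}_{\nu}(t)$ is the axis whose endpoints at infinity are the images of the endpoints of $\tilde\gamma$. Its endpoints are limits of these piecewise left-shifted broken geodesics. The key observation is that left shears move the forward endpoint of $\tilde\gamma$ to the left as seen from $\tilde\nu_0$. Shears on the other side move the backward endpoint, and by the symmetry of ``left'' under reversing orientation they push it in the opposite direction at infinity. Both motions rotate the new axis in the same sense about its crossing with $\tilde\nu_0$. With the anticlockwise convention from $\gamma$ to $\nu$, this rotation decreases the angle.

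To make this rigorous, I would use the explicit formula relating the crossing angle to the cross-ratio of the four endpoints of $\tilde\gamma_t$ and $\tilde\nu_0$. The leftward monotonicity of each endpoint then makes that cross-ratio strictly monotone. Strictness holds because $\tilde\gamma$ crosses infinitely many lifts of $\nu$ with positive weight on each side, so at least one shear moves an endpoint nontrivially.

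\textbf{General lamination.} Approximate $\nu$ by weighted multicurves $\nu_k\to\nu$ in $\mathcal{ML}$. Earthquake paths depend continuously on $\nu$, uniformly for $t$ in compact sets, so the angle functions $\theta_k(t)$ converge to $\theta(t)$ at corresponding intersection points. This gives weak monotonicity of $\theta$ immediately.

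\textbf{Main obstacle: strictness in the limit.} Strict monotonicity is not preserved under limits. I would handle it by working directly in the universal cover for $\nu$ itself. For $t>0$, the total shear applied along the segment of $\tilde\gamma$ on each side of $\tilde\nu_0$ is bounded below by $t$ times the transverse $\nu$-measure of a subarc of $\gamma$. Since $\gamma$ meets the support of $\nu$ transversely at an intersection point, and the support of a measured lamination has full support for its transverse measure, every nontrivial subarc through such points has positive measure. This yields a strict displacement of at least one endpoint at infinity, with a quantitative lower bound that survives the limit. Combined with the cross-ratio formula, it gives $\theta(t)<\theta(0)$.

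An alternative is to differentiate. I would try to verify that $\theta'(0)$ equals minus a positive integral over the remaining intersections of $\gamma$ with $\nu$, with integrand of the form $\sin\theta_j\,e^{-(\text{distance})}$-type terms. This is the analogue of the Wolpert--Kerckhoff derivative formula, and it is manifestly negative whenever the transverse measure along $\gamma$ is nonzero.
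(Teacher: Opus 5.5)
The paper gives no proof of its own here; it imports the statement from Kerckhoff. Your argument is essentially the standard proof of that cited result: normalize so the lift $\tilde\nu_0$ through the intersection stays fixed, observe that left shears push both endpoints of $\tilde\gamma$ in the same circular direction, and read the angle off the four endpoints via a cross-ratio. You can drop the detour through multicurve approximations: your strictness step already runs the endpoint argument directly on the earthquake of $\nu$, and that argument gives weak and strict monotonicity at once, which spares you the delicate matching of intersection points of $\gamma$ with $\nu_k$ to those with $\nu$.
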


\begin{thm}[Theorem \ref{mainthm} from the Introduction]
$f_{\mathrm{dia}}$ is a topological Morse function on $\mathcal{T}_{g}$.
\end{thm}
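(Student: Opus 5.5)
The proof will invoke the criterion of \cite{ConvexTMF} for Min-type functions. A function of the form
\[
-f_{\mathrm{dia}}=\inf\{-L(P_{m})\}
\]
is a topological Morse function if two conditions hold. First, locally the infimum is taken over a finite family of functions. Second, these functions satisfy a suitable convexity property along a family of paths that join any two points of $\mathcal{T}_{g}$.

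The first condition is exactly Lemma \ref{LFL}. On a neighbourhood $\mathcal{N}(x)$, $-f_{\mathrm{dia}}$ is the minimum of $-L(P_{m})$ over the finitely many $m$-pods contained in $\mathcal{G}_{d}(x)$. Combined with Proposition \ref{PWS}, this shows that $-f_{\mathrm{dia}}$ is locally a minimum of finitely many continuous, piecewise smooth functions. So the work reduces to a convexity statement for $n$-pod lengths. The plan is to show that each $L(P_{m})$ is convex along every left earthquake path $\mathcal{E}_{\nu}$, strictly so wherever $\nu$ meets the pod. Here $L(P_{m})$ is considered on the domain where $B(P_{m})$ is nonempty, and Theorem \ref{th1} supplies the earthquake paths between any two points. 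I expect the convexity needed in \cite{ConvexTMF} to be weaker than strict convexity of each function separately. What is really needed is that $-f_{\mathrm{dia}}$ has no degenerate critical configurations, i.e.\ the analogue of Schmutz Schaller's eutactic/perfect dichotomy. I would try to verify that version.

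For the convexity step I would follow the argument of \cite{NRP}. Fix a graph representative of $P_{m}$ at $\mathcal{E}_{\nu}(t_{0})$ realising the supremum in the definition of $L(P_{m})$. Follow the earthquake deformation of the surface with the vertices carried along, and compute the first variation of each edge length. By the cosine formula, this is $\sum\cos\theta_{i}$ weighted by $\nu$, summed over the intersections of the edge with $\nu$, as in Kerckhoff's formula for closed geodesics. The second variation is then controlled by Proposition \ref{Kercklem}: the angles $\theta_{i}(t)$ are strictly decreasing, so the derivative of each edge length is strictly increasing along the path. Two further contributions must be handled. One is the vertex-angle terms, since the edges are arcs rather than closed curves. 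The other is the correction from readjusting the vertices so that all $m$ edges stay equal and equal to the distance between the vertices. For the first, I would double each edge pair into the closed curves of Remark \ref{basics}, whose lengths are convex by \cite{NRP}. I would then use Lemma \ref{calclem} to relate these curve lengths monotonically to the edge length through the vertex angles.

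The main obstacle is passing convexity through the constraint and the supremum. The representative of $P_{m}$ moves inside the at most one-dimensional family $B(P_{m})$ (Remark \ref{defrem}), and $L(P_{m})$ is the maximum of edge length over this compact set. A supremum of convex functions is convex, so it suffices to show that for each fixed constrained configuration the edge length is convex along $\mathcal{E}_{\nu}$. For rigid $m$-pods with $m\geq4$, which by the corollaries to Lemma \ref{calclem} are what actually appear in $\mathcal{G}_{d}(x)$, the configuration is locally determined by the surface. I would first try the implicit function theorem on the equal-length equations and show that the induced vertex motion only adds a nonnegative term. Lemma \ref{calclem}(2) says the symmetric position is a local minimum of edge length, which suggests the sign is right. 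Once this is established, the ConvexTMF criterion gives regular points wherever the gradients of the active $L(P_{m})$ fail to contain $0$ in their convex hull's interior, and nondegenerate critical points otherwise. This proves Theorem \ref{mainthm}.
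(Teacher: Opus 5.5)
\textbf{There is a genuine gap. The key step is unproven, and as set up it cannot be completed.}

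Everything rests on the claim that the constrained motion of the vertices adds a nonnegative second-order term, so that each $L(P_{m})$ is convex along every earthquake path. You only announce this claim. Lemma \ref{calclem}(2) cannot supply the sign: it concerns moving the vertices on one fixed surface (fixed $a$), and says nothing about the mixed term between deforming the surface and the induced vertex motion. For the same reason, doubling edges into the closed curves of Remark \ref{basics} does not transfer convexity to the edges, since the formula in Lemma \ref{calclem} also involves vertex angles that vary along the path.

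The reduction ``a supremum of convex functions is convex'' does not apply either. The constraint set $B(P_{m})(\mathcal{E}_{\nu}(t))$ changes with $t$. For the rigid pods you end up using, it is a single moving point, so the vertex motion is the whole problem.

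Worse, the convexity you want is incompatible with the rest of your set-up.
\begin{enumerate}
\item To apply Theorem \ref{citethm}, you need $f_{\mathrm{dia}}$ to be, on a whole neighbourhood of each point, the maximum of finitely many $L(P_{m})$.
\item If these were convex along every earthquake path, then $t\mapsto f_{\mathrm{dia}}(\mathcal{E}_{\nu}(t))$ would be convex on all of $\mathbb{R}$ for every $\nu$. This is because convexity of a continuous function of one variable is a local property.
\item Take $\nu$ to be a simple closed curve $c$ with unit weight, so $\mathcal{E}_{\nu}$ is a Fenchel--Nielsen twist line. Twisting for time $L(c)(x)$ carries each point $y$ of the line to $T_{c}^{\pm 1}\cdot y$, where $T_{c}$ is the Dehn twist.
\item By mapping class group invariance, $f_{\mathrm{dia}}$ is therefore periodic along the line. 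A periodic convex function on $\mathbb{R}$ is constant.
\item Hence $f_{\mathrm{dia}}$ would be invariant under every twist flow. Since the twist vector fields of finitely many curves span each tangent space, $f_{\mathrm{dia}}$ would be constant on $\mathcal{T}_{g}$.
\item This contradicts $f_{\mathrm{dia}}\to\infty$ as a curve is pinched (collar lemma).
\end{enumerate}
So one of your two ingredients must fail, whatever the vertex term turns out to be.

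\textbf{Your set-up also does not match the criterion you cite.} Theorem \ref{citethm} applied to $-f_{\mathrm{dia}}=\min\{-L(P_{m})\}$ needs the $-L(P_{m})$ to be strictly convex, i.e.\ the $L(P_{m})$ to be strictly concave. That is the opposite of what you set out to prove. Your final sentence then uses the min-type gradient dichotomy as if $f_{\mathrm{dia}}$ were the minimum of the $L(P_{m})$. The theorem also needs strict convexity in a chart, not along a family of paths joining pairs of points. Earthquake paths are not the coordinate lines of any chart, so even a correct earthquake-convexity result would not plug in.

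\textbf{Comparison with the paper.} The paper's proof differs on each point.
\begin{itemize}
\item It invokes a max-version of Theorem \ref{citethm}.
\item It fixes a maximal lamination adapted to $x$ and to the finite set $\Pi$ from Lemma \ref{LFL}: every leaf crosses every pod in $\Pi$, misses their vertices, and gives $\dot{\gamma}_{1},\dot{\gamma}_{2}\neq 0$. It uses the resulting shear coordinates as the chart.
\item Instead of trying to sign the vertex term, it reparametrises each coordinate as $t=f(\tau)$ with $f'>0$ and $f''>0$ to absorb it.
\end{itemize}
Be aware, though, that a maximum of functions strictly convex in a chart is itself strictly convex there. So that version of the criterion can only produce local minima. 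A mountain-pass argument between two points of one mapping class group orbit shows this cannot be the local picture at every point of $\mathcal{T}_{g}$. The max/min bookkeeping is exactly where any proof of Theorem \ref{mainthm} needs the most care.
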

\begin{proof}
The proof is based on the following result from \cite{ConvexTMF}. The word ``convex'' was used in \cite{ConvexTMF} to mean what we are calling strictly convex here.

\begin{thm}[Theorem 1.2 of \cite{ConvexTMF}]
\label{citethm}
Let $F:=\{f_{i}\ |\ i\in N\}$ be a set of functions on an $n$-dimensional topological manifold $M$, and define a function $f:M\rightarrow \mathbb{R}$ with the property that for every $x\in M$, $f:M\rightarrow \mathbb{R}$ is given by $x\mapsto \min\{f_{i}(x)\ |\ f_{i}\in F\}$. Suppose also that for every $x\in M$, there is a neighbourhood $N(x)$ of $x$ on which $f$ can be evaluated as the minimum of a finite subset $F_{x}$ of $F$, where $N(x)$ is contained in the domain of a chart on which the elements of $F_{x}$ map to strictly convex functions on an open set of $\mathbb{R}^{n}$. Then $f$ is a topological Morse function.
\end{thm}

Define the cone of decrease of $f_{\mathrm{dia}}$ at $x\in \mathcal{T}_{g}$ to be the set of tangent vectors in $T_{x}\mathcal{T}_{g}$ that give directions in which $f_{\mathrm{dia}}$ is nonincreasing. In Theorem \ref{citethm} one can replace ``minimum'' by ``maximum'' with the same proof, except now the argument uses the convexity of the sublevel sets to show that at a noncritical point, the cone of decrease of $f_{\mathrm{dia}}$ (instead of the cone of increase) is the tangent cone to the intersection of convex level sets, and hence truly a cone. Theorem \ref{mainthm} then follows from Lemma \ref{LFL} and the construction of a system of coordinates that will now be given.

Let $\Pi$ be the finite set of $m$-pods whose length determines $f_{\mathrm{dia}}$ on $\mathcal{N}(x)$ by Lemma \ref{LFL}.

 We will make use of shear coordinates as defined in Section 3.4 of \cite{Bestvina}. This set of coordinates involves a choice of a maximal geodesic lamination $\lambda$. The conventions used in the definition of the coordinates then uniquely determine a point in $\mathcal{T}_{g}$ that is the origin of the coordinates.  The shear coordinates of a point $x\in \mathcal{T}_{g}$ are a set of $6g-6$ real numbers that measure the amount of shearing along each leaf of $\lambda$ needed to go from the marked hyperbolic surface corresponding to the origin to to the marked hyperbolic surface corresponding to $x$. For a fixed $x\in \mathcal{T}_{g}$, local finiteness implies that it is possible to choose a maximal geodesic lamination $\lambda$ with the property that every leaf of $\lambda$ intersects every element of $\Pi$, and every leaf is disjoint from the vertices of all $m$-pods in $\Pi$.


Fix $x\in \mathcal{T}_{g}$, and suppose $P_{n}\in \Pi$. We set $x=\mathcal{E}_{\nu}(t_{0})$, where  $\mathcal{E}_{\nu}$ is an earthquake path that involves shearing along one of the leaves $\lambda_{i}=\nu$ of $\lambda$, i.e. varying only one of the shear coordinates, which will be denoted by $t$.

Since the earthquake map is an isometry away from the leaves of $\nu$, in this sense the vertices $p_{1}$ and $p_{2}$ are fixed when the $n$-valent graph representing $P_{n}$ at $\mathcal{E}_{\nu}(t_{0})$ is deformed along $\mathcal{E}_{\nu}$. If the vertices are fixed in this sense, the proof of Corollary 3.4 of \cite{NRP} shows that the change in the length $l_{1}$ of edge $e_{1}$ of the $n$-valent graph is either given by Wolpert's twist formula \cite{Wolperttwist} when $\nu$ is a closed geodesic, or by the generalisation of Wolpert's twist formular to laminations, given by
\begin{equation}
\label{Kerckeqn}
\frac{dl_{1}}{dt}|_{t_{0}}=\int_{e_{1}}\cos(\theta)d\nu
\end{equation}
where $d\nu$ is the transverse measure on the measured lamination $\nu$ and $\theta$ is the function measuring the angle anticlockwise from $e_{1}$ to $\nu$. Analogous equations apply to the lengths $l_{2}, \ldots, l_{n}$ of the edges $e_{2}, \ldots, e_{n}$ of the fixed vertex graph representative of $P_{n}$. It follows from Proposition \ref{Kercklem} that each of the lengths $l_{1}, \ldots, l_{n}$ is a convex function along $\mathcal{E}_{\nu}$. The question remains whether convexity of $L(P_{n})\circ \mathcal{E}_{\nu}(t)$ still holds when the vertices move around the surface in such a way that the defining constraints of an $n$-pod are fulfilled. This appears to be true, but to prove the theorem, there are some cases in which we will replace $t$ by a parameter $\tau$ in order to define the desired coordinate chart.

When considering lengths of $n$-pods, we will consider only rigid $n$-pods. This is sufficient because if an $n$-pod $P_{n}$ is not rigid, its length is realised by a rigid $m$-pod, $m>n$, on the boundary of $B(P_{n})$.

Let $S_{g}(t)$ be the marked hyperbolic surface corresponding to $\mathcal{E}_{\nu}(t)$. Denote by $\gamma_{1}$ and $\gamma_{2}$ the paths traced out in $\mathcal{E}_{\nu}(t)\times S_{g}(t)$ by the vertices $p_{1}$ and $p_{2}$ respectively as $t$ varies from $t_{0}-\epsilon$ to $t_{0}+\epsilon$. The rigidity assumption on $n$-pods ensures that the constraints on the edge lengths of the graph uniquely determine the continuously differentiable paths $\gamma_{1}$ and $\gamma_{2}$. More specifically, to first order, the rate of change of the length of the edge $e_{i}\circ \mathcal{E}_{\nu}(t)$ is given by

\begin{equation}
\label{secondordereq}
\frac{dl_{i}}{dt}(t_{0})-g\left(\hat{e}_{i,1}(t_{0}), \dot{\gamma}_{1}(t_{0})\right)-g\left(\hat{e}_{i,2}(t_{0}), \dot{\gamma}_{2}(t_{0})\right)
\end{equation} 

where $g$ is the induced metric on the cartesian product of $\mathcal{E}_{\nu}(t)\times S_{g}(t)$, $\hat{e}_{i,1}(t)$ is the unit vector in $T_{(t,p_{1})}\mathcal{E}_{\nu}(t)\times S_{g}(t)$ given by the inward pointing 1-sided tangent vector to the edge $e_{i}$ at the (fixed) vertex $p_{1}$, and $\hat{e}_{i,2}(t)$ is the unit vector in $T_{(t,p_{2})}\mathcal{E}_{\nu}(t)\times S_{g}(t)$ given by the inward pointing 1-sided tangent vector to the edge $e_{i}$ at the (fixed) vertex $p_{2}$. 


As illustrated schematically in Figure \ref{secondorderfig}, hyperbolic trigonometry implies that the movement of the vertices along the paths $\gamma_{1}$ and $\gamma_{2}$ is such that the angles between the edges of the equal edge length graph and $\dot{\gamma}_{1}(t)$ and $\dot{\gamma}_{2}(t)$ increase with $t$ unless they are zero, in which case they might stay constant. 

\begin{figure}
\centering
\includegraphics[width=0.6\textwidth]{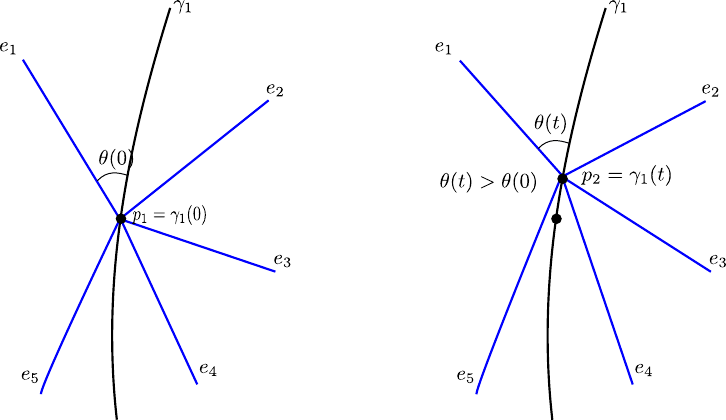}
\caption{Moving the vertex $p_{1}$ along $\gamma_{1}$ in the direction of increasing $t$ increases the angles made by the edges with $\dot{\gamma}_{1}$. The path shown has tangent given by the projection of the vector field $\dot{\gamma(t)}$ onto the tangent space of the fiber.}
\label{secondorderfig}
\end{figure}

For one or both of $|\dot{\gamma}_{1}(t_{0})|$ and $|\dot{\gamma}_{2}(t_{0})|$ to be zero requires the angles of intersection of $\nu$ with the edges of $P_{n}(x)$ to satisfy very specific constraints coming from Equation \eqref{Kerckeqn} and depending on the angles between the edges of $P_{n}(x)$ at the vertices. We choose $\lambda$ such that none of the leaves of $\lambda$ determine an earthquake path with  $\dot{\gamma}_{1}=0$ and/or $\dot{\gamma}_{2}=0$ through the point  $\mathcal{E}_{\nu}(t_{0})$, either for $P_{n}(x)$ or for any of the other rigid $m$-pods in $\Pi$.

In Equation \eqref{secondordereq}, we know that $l_{i}$ is a convex function along $\mathcal{E}_{\nu}(t)$, and that the angle between $\hat{e}_{i,1}$ and $\dot{\gamma}_{1}$ and also the angle between $\hat{e}_{i,2}$ and $\dot{\gamma}_{2}$ is increasing with $t$ or constantly zero. Consequently, $L(P_{n})$ is strictly convex along $\mathcal{E}_{\nu}(t)$ if $|\dot{\gamma}_{1}|$ and $|\dot{\gamma}_{2}|$ are both nondecreasing. If one or both of $|\dot{\gamma}_{1}|$ and $|\dot{\gamma}_{2}|$ is decreasing, and $|\dot{\gamma}_{1}(t_{0})|\neq 0$ and $|\dot{\gamma}_{2}(t_{0})|\neq 0$, write $t=f(\tau)$. It is possible to choose $f$ to be, for example, an exponential function, such that 
\begin{enumerate}
\item{Both $\frac{dt}{d\tau}>0$ and $\frac{d^2t}{d\tau^{2}}>0$ at $t=t_{0}$,}
\item{Both $|\dot{\gamma}_{1}\frac{dt}{d\tau}|$ and $|\dot{\gamma}_{2}\frac{dt}{d\tau}|$ are nondecreasing at $t_{0}$,}
\item{Conditions 1 and 2 hold for all of the other rigid $m$-pods in$\Pi$.}
\end{enumerate}
The parameter $\tau$ is then one of the coordinates defined on the required coordinate chart on a neighbourhood of $x$. The other parameters are obtained from the corresponding sheer coordinate in the same way.
\end{proof}

\bibliography{TMFbib2}
\bibliographystyle{plain}

\end{document}